\crefname{empty}{}{}
\newlist{alist}{enumerate}{1}
\setlist[alist]{label=(\alph*),itemsep=.1ex, ref=(\alph*)}
\newlist{rlist}{enumerate}{1}
\setlist[rlist]{label=(\roman*),itemsep=.1ex, ref=(\roman*)}
\newlist{nlist}{enumerate}{1}
\setlist[nlist]{label=(\arabic*),itemsep=.1ex, ref=(\arabic*)}
\crefname{equation}{}{}
\Crefname{figure}{Figure}{Figures}
\crefname{page}{page}{pages}
\Crefname{enumi}{}{}
\Crefname{subsection}{Subsection}{Subsections}
\def\theoremname{Theorem}%
\def\propositionname{Proposition}%
\def\lemmaname{Lemma}%
\def\corollaryname{Corollary}%
\def\definitionname{Definition}%
\def\conventionname{Convention}%
\def\axiomname{Axiom}%
\def\remarkname{Remark}%
\def\examplename{Example}%
\def\questionname{Question}%
\def\constructionname{Construction}%
\def\notationname{Notation}%
\def\assumptionname{Assumption}%
\def\conjecturename{Conjecture}%
\newtheorem{thm}{\theoremname}[section]
\theoremstyle{plain}
\newtheorem{theorem}[thm]{\theoremname}
\newtheorem{prop}[thm]{Proposition}
\newtheorem{lemma}[thm]{\lemmaname}
\newtheorem{corollary}[thm]{\corollaryname}
\theoremstyle{definition}
\newtheorem{example}[thm]{\examplename}
\DeclareMathOperator{\sign}{sgn}
\DeclareMathOperator{\supp}{supp}
\newcommand{\field}[1]{\ensuremath{\mathbb{#1}}}
\newcommand{\0}{\ensuremath{\mathbf{0}}}
\newcommand{\1}{\ensuremath{\mathbf{1}}}
\newcommand{\ignoreSpellCheck}[1]{#1}
\begin{document}

\title[On a problem of J.~Matkowski and J.~Wesołowski,~II]{On a problem of Janusz Matkowski and Jacek Wesołowski,~II}

\selectlanguage{polish}
\author[J. Morawiec]{Janusz Morawiec}
\address{Instytut Matematyki{}\\
Uniwersytet Śląski{}\\
Bankowa 14, PL-40-007 Katowice{}\\
Poland}
\email{morawiec@math.us.edu.pl}

\author[T. Zürcher]{Thomas Zürcher}
\address{Instytut Matematyki{}\\
Uniwersytet Śląski{}\\
Bankowa 14, PL-40-007 Katowice{}\\
Poland}
\email{thomas.zurcher@us.edu.pl}
\selectlanguage{English}
\subjclass{Primary  39B12, Secondary 37A05}
\keywords{functional equations; probabilistic iterated function systems; singular functions; absolutely continuous functions}

\begin{abstract}
We continue our study started in \cite{MZ} of the
functional equation
\begin{equation*}
\varphi(x)=\sum_{n=0}^{N}\varphi(f_n(x))-\sum_{n=0}^{N}\varphi(f_n(0))
\end{equation*}
and its increasing and continuous solutions $\varphi\colon[0,1]\to[0,1]$
such that $\varphi(0)=0$ and $\varphi(1)=1$. In this paper we assume that $f_0,\ldots,f_N\colon[0,1]\to[0,1]$ are strictly increasing contractions such that
\begin{equation*}
0\leq f_0(0)<f_0(1)\leq f_1(0)<\cdots <f_{N-1}(1)\leq f_N(0)<f_N(1)\leq 1
\end{equation*}
 and at least one of the weak inequalities is strong.
\end{abstract}

\maketitle


\section{Introduction}\label{introduction}
Fix $N\in\mathbb N$ and strictly increasing contractions $f_0,\ldots,f_N\colon[0,1]\to[0,1]$ such that
\begin{equation}\label{0<1}
0\leq f_0(0)<f_0(1)\leq f_1(0)<\cdots<f_{N-1}(1)\leq f_N(0)<f_N(1)\leq 1.
\end{equation}
We continue our study of the existence of solutions $\varphi$ of the functional equation
\begin{equation}\label{e}
\varphi(x)=\sum_{n=0}^{N}\varphi(f_n(x))-\sum_{n=0}^{N}\varphi(f_n(0))
\tag{$\textsf{E}$}
\end{equation}
in the class $\mathcal C$ consisting of all increasing and continuous functions  $\varphi\colon [0,1]\to [0,1]$ satisfying the following boundary conditions
\begin{equation}\label{cond}
\varphi(0)=0\hspace{3ex}\hbox{ and }\hspace{3ex}\varphi(1)=1.
\end{equation}
In this paper we assume, in contrast to~\cite{MZ}, that
\begin{equation}\label{[0,1]}
\bigcup_{n=0}^{N}\big[f_n(0),f_n(1)\big]\neq [0,1].
\end{equation}


\section{Preliminaries}\label{Preliminaries}
Throughout this paper for all $k\in\mathbb N$ and $n_1,\ldots,n_k\in\{0,\ldots,N\}$ we denote the composition $f_{n_1}\circ\cdots\circ f_{n_k}$ by $f_{n_1,\ldots,n_k}$. Moreover, we extend the notation to the case $k=0$ by letting $f_{n_1,\ldots,n_0}$ be the identity.

We begin with three lemmas. The proof of the first one is very easy, so we omit it.

\begin{lemma}\label{lem21}
Fix $m\in\mathbb N$ and nonnegative real numbers $\alpha_1,\ldots,\alpha_m$ such that $\sum_{i=1}^m\alpha_i=1$. If $\varphi_1,\ldots,\varphi_m\in\mathcal C$, then
$\sum_{i=1}^m\alpha_i\varphi_i\in\mathcal C$.
\end{lemma}

\begin{lemma}\label{lem22}
If $\varphi\in\mathcal C$, then $\varphi(f_0(0))=0$ and $\varphi(f_N(1))=1$.
\end{lemma}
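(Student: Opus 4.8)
The plan is to extract everything we need from the single instance $x=1$ of \eqref{e}, combined with monotonicity of $\varphi$ and the chain of inequalities \eqref{0<1}.

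First I would substitute $x=1$ into \eqref{e} and invoke the boundary conditions \eqref{cond}, which gives
\[
1=\varphi(1)=\sum_{n=0}^{N}\varphi(f_n(1))-\sum_{n=0}^{N}\varphi(f_n(0))=\sum_{n=0}^{N}\bigl(\varphi(f_n(1))-\varphi(f_n(0))\bigr).
\]
Next I would decompose the total increment $\varphi(1)-\varphi(0)=1$ as a telescoping sum along the ordered chain $0\le f_0(0)<f_0(1)\le f_1(0)<\cdots\le f_N(0)<f_N(1)\le 1$ coming from \eqref{0<1}, namely
\[
1=\varphi(1)-\varphi(0)=\bigl(\varphi(f_0(0))-\varphi(0)\bigr)+\sum_{n=0}^{N}\bigl(\varphi(f_n(1))-\varphi(f_n(0))\bigr)+\sum_{n=0}^{N-1}\bigl(\varphi(f_{n+1}(0))-\varphi(f_n(1))\bigr)+\bigl(\varphi(1)-\varphi(f_N(1))\bigr).
\]
Since $\varphi$ is increasing and the arguments are ordered as above, each summand on the right-hand side is nonnegative, while the middle sum already equals $1$ by the previous step. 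Hence all the remaining terms vanish; in particular $\varphi(f_0(0))-\varphi(0)=0$ and $\varphi(1)-\varphi(f_N(1))=0$, so by \eqref{cond} we get $\varphi(f_0(0))=0$ and $\varphi(f_N(1))=1$.

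I do not expect a genuine obstacle: the argument is elementary and uses \eqref{e} only at $x=1$. The single point requiring care is that the telescoping decomposition is legitimate precisely because of the ordering \eqref{0<1} (so that the gap intervals $[0,f_0(0)]$, $[f_n(1),f_{n+1}(0)]$ and $[f_N(1),1]$ are well defined), and that $\varphi$ is merely increasing, so the conclusion is an equality of values, not of arguments. As a byproduct the same computation also yields $\varphi(f_{n+1}(0))=\varphi(f_n(1))$ for $n=0,\ldots,N-1$, which can be recorded for later use.
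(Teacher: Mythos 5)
Your proof is correct and follows essentially the same route as the paper: both evaluate \eqref{e} at $x=1$ and exploit the ordering \eqref{0<1} together with monotonicity, the only cosmetic difference being that the paper bounds $\varphi(f_N(1))-\varphi(f_0(0))\geq 1$ and invokes $\varphi([0,1])\subset[0,1]$, whereas you telescope along the chain and use $\varphi(0)=0$, $\varphi(1)=1$ directly. Your byproduct $\varphi(f_{n+1}(0))=\varphi(f_n(1))$ is indeed worth recording; it is equality \eqref{i2}, which the paper only establishes later inside the proof of \cref{lem24}.
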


\begin{proof}
By \eqref{cond}, \eqref{e}, \eqref{0<1}, and the monotonicity of $\varphi$ we have
\begin{equation*}
\begin{split}
1&=\varphi(1)=\varphi(f_N(1))-\varphi(f_0(0))
+\sum_{n=1}^{N}\big[\varphi(f_{n-1}(1))-\varphi(f_n(0))\big]\\
&\leq\varphi(f_N(1))-\varphi(f_0(0)).
\end{split}
\end{equation*}
As the image of~$[0,1]$ under~$\varphi$ lies in~$[0,1]$, we infer that $\varphi(f_0(0))=0$ and $\varphi(f_N(1))=1$.
\end{proof}

Now we want to show that if all the contractions $f_0,\ldots, f_N$ are \emph{nonsingular} (i.e.\ $f_0^{-1}(A),\ldots,f_N^{-1}(A)$ have Lebesgue measure zero for every set $A\subset [0,1]$ of Lebesgue measure zero\footnote{See~\cite{LM1994}. Note also that as the inverses of the contractions exist and are continuous and increasing, being nonsingular is equivalent to the inverses being absolutely continuous, see for example Theorem~7.1.38 in \cite{KaKr96}.}), then the class $\mathcal C$ is determined by two of its subclasses $\mathcal C_a$ and $\mathcal C_s$ of all absolutely continuous and all singular functions, respectively. Repeating directly the proof of Remark~2.2 from~\cite{MZ} with the use of \cref{lem22} we get the following result.

\begin{lemma}\label{lem23}
Assume that all the contractions $f_0,\ldots, f_N$ are nonsingular. Then, both the absolutely continuous and the singular parts\footnote{The parts are unique up to a constant. For definiteness, we choose them such that both of them map $0$~to~$0$.} of every element from $\mathcal C$ satisfy $\eqref{e}$ for every $x\in[0,1]$. 	 	
\end{lemma}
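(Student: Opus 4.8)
The plan is to reduce everything to the uniqueness of the Lebesgue decomposition. Fix $\varphi\in\mathcal C$ satisfying \eqref{e} and write $\varphi=\varphi_a+\varphi_s$, where $\varphi_a$ is the absolutely continuous part and $\varphi_s$ the singular part, normalized by $\varphi_a(0)=\varphi_s(0)=0$; since $\varphi$ is continuous there is no jump part, so $\varphi_a$ and $\varphi_s$ are themselves increasing and continuous. Write $T\psi$ for the right-hand side of \eqref{e}, that is $T\psi(x)=\sum_{n=0}^{N}\psi(f_n(x))-\sum_{n=0}^{N}\psi(f_n(0))$; note that $T$ is linear and $T\psi(0)=0$ for every $\psi$. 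It then suffices to prove that $T\varphi_a$ is absolutely continuous and $T\varphi_s$ is singular: granting this, $\varphi_a+\varphi_s=\varphi=T\varphi=T\varphi_a+T\varphi_s$ exhibits $\varphi$ as a sum of an absolutely continuous and a singular function, both vanishing at $0$, and the uniqueness of such a representation forces $\varphi_a=T\varphi_a$ and $\varphi_s=T\varphi_s$, which is precisely the claim.

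Since finite sums of absolutely continuous functions are absolutely continuous, finite sums of increasing singular functions are singular, and subtracting the constants $\psi(f_n(0))$ is harmless, the whole matter comes down to two statements about a single composition $\psi\circ f_n$. \textbf{(i)} If $\psi$ is absolutely continuous, so is $\psi\circ f_n$: this holds because $f_n$ is a contraction, hence Lipschitz and increasing, and composing an absolutely continuous function with an increasing Lipschitz map stays absolutely continuous (in the $\varepsilon$--$\delta$ definition, replace $\delta$ by $\delta$ divided by the Lipschitz constant of $f_n$). \textbf{(ii)} If $\psi$ is singular, so is $\psi\circ f_n$: this is where the hypothesis is used. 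Passing to Lebesgue--Stieltjes measures, the measure of $\psi\circ f_n$ is the push-forward under $f_n^{-1}$ of the restriction of the measure of $\psi$ to $[f_n(0),f_n(1)]$; as $\psi$ is singular, the latter is carried by a Lebesgue-null set $Z$, so the push-forward is carried by $f_n^{-1}(Z)$, and $f_n^{-1}(Z)$ is Lebesgue-null exactly because $f_n$ is nonsingular. (Equivalently, one may invoke the absolute continuity of $f_n^{-1}$ recorded in the footnote to \Cref{lem23}.) Hence $\psi\circ f_n$ is increasing with a measure singular to Lebesgue measure, i.e.\ it is a singular function. Combining \textbf{(i)} and \textbf{(ii)} with the closure properties just mentioned yields that $T\varphi_a$ is absolutely continuous and $T\varphi_s$ is singular, and, as explained, this finishes the proof. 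The role of \Cref{lem22} here is only to supply $\varphi(f_0(0))=0$ and $\varphi(f_N(1))=1$, which in \cite{MZ} were automatic but under \eqref{[0,1]} are not; in particular these identities make $\varphi$ constant on $[0,f_0(0)]$ and on $[f_N(1),1]$, so no singular mass hides in those end pieces and the bookkeeping on the parts is consistent.

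The only genuine obstacle is \textbf{(ii)}, the assertion that post-composing a singular function with a contraction $f_n$ again gives a singular function; this is exactly the statement that fails without nonsingularity, since then $f_n^{-1}$ could carry the null set supporting $\varphi_s$ to a set of positive measure, and $T\varphi_s$ could pick up an absolutely continuous component. Everything else — the absolutely continuous case \textbf{(i)}, stability of both classes under finite sums, the normalizations, and the uniqueness of the Lebesgue decomposition — is routine.
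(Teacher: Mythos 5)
Your proof is correct and is essentially the argument the paper has in mind (it only cites Remark~2.2 of \cite{MZ}): linearity of the right-hand side, stability of the absolutely continuous class under composition with Lipschitz increasing maps, stability of the singular class under composition with nonsingular maps, and uniqueness of the Lebesgue decomposition normalized at $0$. The only cosmetic remark is that your appeal to \cref{lem22} is not actually load-bearing in your argument, since $T\psi(0)=0$ holds automatically from the definition of the right-hand side of \eqref{e}, so the decomposition bookkeeping needs no boundary information.
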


By the monotonicity of $f_0$ and $f_N$, it is easy to prove that the sequence $(f_{\scriptstyle\underbrace{0,\ldots,0}_{k}}(0))_{k\in\mathbb N}$ is increasing and the sequence
$(f_{\scriptstyle\underbrace{N,\ldots,N}_{k}}(1))_{k\in\mathbb N}$ is decreasing. Hence both are convergent. Put
\begin{equation*}\label{endpoints}
\0=\lim_{k\to\infty}f_{\scriptstyle\underbrace{0,\ldots,0}_{k}}(0)
\quad\hbox{ and }\quad
\1=\lim_{k\to\infty}f_{\scriptstyle\underbrace{N,\ldots,N}_{k}}(1).
\end{equation*}

It is clear that $\0$ is the unique fixed point of $f_0$ and $\1$ is the  unique fixed point of $f_N$, i.e.
\begin{equation}\label{fixed}
f_0(\0)=\0\quad\hbox{ and }\quad f_N(\1)=\1.
\end{equation}
Moreover,
\begin{equation*}
\1=\lim_{k\to\infty}f_{\scriptstyle\underbrace{N,\ldots,N}_{k}}(0),
\end{equation*}
because for every $k\in\mathbb N$ we have $|f_{\scriptstyle\underbrace{N,\ldots,N}_{k}}(1)-f_{\scriptstyle\underbrace{N,\ldots,N}_{k}}(0)|\leq c^k$, where $c\in(0,1)$ is a Lipschitz constant of $f_N$.

\begin{lemma}\label{lem24}
Assume that $\varphi\in\mathcal C$. Then $\varphi(\0)=0$ and $\varphi(\1)=1$.
\end{lemma}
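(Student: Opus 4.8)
The plan is to iterate the functional equation \eqref{e} along the "extreme" branches $f_0$ and $f_N$ and use the fixed point relations \eqref{fixed} together with \cref{lem22}. First I would evaluate \eqref{e} at the fixed point $\0$ of $f_0$. Since $f_0(\0)=\0$, the term $\varphi(f_0(\0))$ on the right-hand side equals $\varphi(\0)$, so it cancels with the left-hand side, leaving
\begin{equation*}
0=\sum_{n=1}^{N}\varphi(f_n(\0))-\sum_{n=0}^{N}\varphi(f_n(0)).
\end{equation*}
Now $\0\in[0,1]$ and $0\le\0$, so by monotonicity $\varphi(f_n(\0))\ge\varphi(f_n(0))$ for each $n$, while the $n=0$ term $-\varphi(f_0(0))$ on the right is $\le 0$; hence every summand must vanish and in particular $\varphi(f_0(0))=0$ (which we already knew from \cref{lem22}) and $\varphi(f_n(\0))=\varphi(f_n(0))$ for $n=1,\dots,N$. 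That last batch of equalities does not yet pin down $\varphi(\0)$, so the fixed-point evaluation alone is not quite enough; the real input has to come from the monotone convergence $f_{0,\dots,0}(0)\to\0$.

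The cleaner route, which I would actually carry out, is this: we know from the construction preceding the lemma that $\0=\lim_{k\to\infty}f_{0,\dots,0}(0)$ (the $k$-fold composition applied to $0$), and this sequence is increasing. By \cref{lem22} we have $\varphi(f_0(0))=0$. I claim $\varphi\big(f_{0,\dots,0}(0)\big)=0$ for every $k\ge 1$ by induction on $k$: indeed, applying \eqref{e} at the point $x=f_{0,\dots,0}(0)$ with $k-1$ zeros (so that $f_0(x)=f_{0,\dots,0}(0)$ with $k$ zeros), the $n=0$ term on the right is $\varphi\big(f_{0,\dots,0}(0)\big)$ with $k$ zeros, and by monotonicity this is squeezed between $\varphi(f_0(0))=0$ and the already-known value of the left-hand side; more directly, from \cref{lem22} applied after iterating, or simply: $0\le \varphi\big(f_{0,\dots,0}(0)\big)\le\varphi\big(f_{0,\dots,0}(0)\big)$ with more zeros is false—let me instead just use continuity. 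Since $\varphi$ is continuous and the sequence $f_{0,\dots,0}(0)$ increases to $\0$,
\begin{equation*}
\varphi(\0)=\lim_{k\to\infty}\varphi\big(f_{\scriptstyle\underbrace{0,\ldots,0}_{k}}(0)\big),
\end{equation*}
so it suffices to show each of these values is $0$. For $k=1$ this is \cref{lem22}. For the inductive step, apply \cref{lem22}'s argument at a shifted scale, or observe directly from \eqref{e} evaluated at $0$ that $\varphi(f_0(0))+\sum_{n\ge 1}\varphi(f_n(0))=\sum_{n}\varphi(f_n(0))$, which is vacuous; the honest inductive step instead evaluates \eqref{e} at $x=f_{0,\dots,0}(0)$ ($k-1$ zeros) and uses that all terms $\varphi(f_n(x))-\varphi(f_n(0))$ for $n\ge 1$ are $\ge 0$ while $\varphi(f_0(x))=\varphi\big(f_{0,\dots,0}(0)\big)$ ($k$ zeros) $\ge\varphi(f_0(0))=0$, and the left side $\varphi(x)=\varphi\big(f_{0,\dots,0}(0)\big)$ ($k-1$ zeros) $=0$ by induction; comparing forces $\varphi(f_0(x))=0$, i.e.\ the $k$-fold value is $0$. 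The statement $\varphi(\1)=1$ follows symmetrically: using \cref{lem22} we have $\varphi(f_N(1))=1$, and applying \eqref{e} at $x=f_{N,\dots,N}(1)$ and the monotonicity of $\varphi$ one shows inductively that $\varphi\big(f_{N,\dots,N}(1)\big)=1$ for all $k$; since that sequence decreases to $\1$, continuity gives $\varphi(\1)=1$.

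The main obstacle is organizing the induction so that the sign bookkeeping in \eqref{e} is airtight: at each stage one must check that evaluating \eqref{e} at the appropriate $k$-fold image of $0$ (resp.\ $1$) makes all but the relevant boundary term nonnegative (resp.\ nonpositive), so that the already-known value $0$ (resp.\ $1$) of $\varphi$ at that point, combined with $\varphi([0,1])\subseteq[0,1]$, pins the next term. Once that is set up, everything else is monotonicity plus continuity and a passage to the limit, exactly as in the proof of \cref{lem22}.
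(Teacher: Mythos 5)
Your proposal is correct and follows essentially the same route as the paper: induct along the extreme branches $f_0$ and $f_N$ using \eqref{e}, \cref{lem22} and the monotonicity of $\varphi$ and the $f_n$'s, then pass to the limit by continuity. The one small divergence is in the second half, where the paper first establishes the equalities $\varphi(f_n(1))=\varphi(f_{n+1}(0))$ and feeds them into the induction, whereas your ``symmetric'' argument only needs the inequalities $\varphi(f_n(x))\leq\varphi(f_n(1))\leq\varphi(f_{n+1}(0))$ coming from \eqref{0<1} together with $\varphi(f_0(0))=0$ to make the remainder $\sum_{n=0}^{N-1}\varphi(f_n(x))-\sum_{n=0}^{N}\varphi(f_n(0))$ nonpositive --- so the sign bookkeeping you flag as the main obstacle does close.
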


\begin{proof}
We first prove that $\varphi(\0)=0$.

By \cref{lem22} we have $\varphi(f_0(0))=0$. Fix $k\in\mathbb N$ and assume inductively that $\varphi(f_{\scriptstyle \underbrace{0,\ldots,0}_{k}}(0))=0$. Applying the induction hypothesis, \eqref{e}, \mbox{\vphantom{g}\cref{lem22}} and the monotonicity of $f_0,\ldots,f_N$ and $\varphi$, we get
\begin{equation*}
\begin{split}
0&=\varphi(f_{\scriptstyle \underbrace{0,\ldots,0}_{k}}(0))
=\sum_{n=0}^{N}\varphi(f_n(f_{\scriptstyle \underbrace{0,\ldots,0}_{k}}(0)))-\sum_{n=0}^{N}\varphi(f_n(0))\\
&=\varphi(f_{\scriptstyle \underbrace{0,\ldots,0}_{k+1}}(0))
+\sum_{n=1}^{N}\varphi(f_n(f_{\scriptstyle \underbrace{0,\ldots,0}_{k}}(0)))
-\sum_{n=1}^{N}\varphi(f_n(0))\\
&\geq\varphi(f_{\scriptstyle \underbrace{0,\ldots,0}_{k+1}}(0))\geq 0.
\end{split}
\end{equation*}
Hence $\varphi(f_{\scriptstyle \underbrace{0,\ldots,0}_{k+1}}(0))=0$.
Now the continuity of $\varphi$ gives
\begin{equation*}
\varphi(\0)=\lim_{k\to\infty}\varphi(f_{\scriptstyle \underbrace{0,\ldots,0}_{k}}(0))=0.
\end{equation*}

To prove that $\varphi(\1)=1$ observe first that by \eqref{0<1} and the monotonicity of $\varphi$ we have $\varphi(f_n(1))\leq\varphi(f_{n+1}(0))$ for every $n\in\{0,\ldots,N-1\}$. We want to show that
\begin{equation}\label{i2}
\varphi(f_n(1))=\varphi(f_{n+1}(0))
\end{equation}
for every $n\in\{0,\ldots,N-1\}$. Suppose that, contrary to our claim, there exists $n\in\{0,\ldots,N-1\}$ such that $\varphi(f_n(1))<\varphi(f_{n+1}(0))$. Then, using \cref{lem22} and arguing as in its proof, we obtain
\begin{equation*}
\begin{split}
1&=\varphi(1)=
\sum_{n=0}^{N-1}\varphi(f_n(1))+1-\sum_{n=0}^{N}\varphi(f_n(0))\\
&<\sum_{n=0}^{N-1}\varphi(f_{n+1}(0))+1-\sum_{n=0}^{N}\varphi(f_n(0))
=1-\varphi(f_0(0))=1,
\end{split}
\end{equation*}
a contradiction.

Now we show by induction that
\begin{equation}\label{i3}
\varphi(f_{\scriptstyle \underbrace{N,\ldots,N}_{k}}(1))=1
\end{equation}
for all $k\in\mathbb N$.
The first step of the induction holds due to \cref{lem22}. Fix $k\in\mathbb N$ and assume that \eqref{i3} holds. Then applying \eqref{i3}, \eqref{e}, \cref{lem22}, \eqref{i2} and the monotonicity of $f_0,\ldots,f_N$ and $\varphi$ we get
\begin{equation*}
\begin{split}
1&=\varphi(f_{\scriptstyle \underbrace{N,\ldots,N}_{k}}(1))=
\sum_{n=0}^{N}\varphi(f_n(f_{\scriptstyle \underbrace{N,\ldots,N}_{k}}(1)))-\sum_{n=1}^{N}\varphi(f_n(0))\\
&\leq \sum_{n=0}^{N-1}\varphi(f_{n}(f_{\scriptstyle \underbrace{N,\ldots,N}_{k}}(1)))+\varphi(f_{\scriptstyle \underbrace{N,\ldots,N}_{k+1}}(1))-
\sum_{n=0}^{N-1}\varphi(f_n(1))\\
&\leq\varphi(f_{\scriptstyle \underbrace{N,\ldots,N}_{k+1}}(1))\leq 1.
\end{split}
\end{equation*}
Hence $\varphi(f_{\scriptstyle \underbrace{N,\ldots,N}_{k+1}}(1))=1$.
Finally, passing with $k$ to infinity in~\eqref{i3} and using the continuity of $\varphi$ we obtain $\varphi(\1)=1$.
\end{proof}


\section{Basic property of solutions}\label{Basic}
Define recursively a sequence $(A_k)_{k\in\mathbb N}$ of subsets of the interval $[0,1]$ as follows:
\begin{equation*}
A_0=[0,1]
\quad\hbox{ and }\quad
A_k=\bigcup_{n=0}^{N}f_n(A_{k-1})\quad\hbox{for every }k\in\mathbb N.
\end{equation*}
By~\eqref{[0,1]} we have $A_1=\bigcup_{n=0}^{N}\big[f_n(0),f_n(1)\big]\varsubsetneq A_0$. Moreover, a witness of the strict inclusion can be found that is different from $0$~and~$1$. This jointly with an easy induction shows that $A_{k+1}\varsubsetneq A_k$ for every $k\in\mathbb N$. Again there is a witness of the strict inequality differing from $0$~and~$1$. Put
\begin{equation*}
A_*=\bigcap_{k\in\mathbb N}A_k.
\end{equation*}
It is clear that $A_*$ is compact and
\begin{equation}\label{structure of invariant set}
A_*=\bigcup_{n=0}^{N}f_n(A_*).
\end{equation}
We will show that the just constructed set $A_*$, called the \emph{attractor} of the iterated function system $\{f_0,\ldots,f_N\}$ (see~\cite{B1988}), is
a \emph{Cantor-like set}, i.e.\ uncountable, nowhere dense and perfect subset of $\mathbb R$ (see~\cite{WiHa1993}); note that $A_*$ is uncountable and nowhere dense, which follows from its construction. Moreover, we will see in \cref{perfect} that $A_*$ is perfect and in \cref{ex34} that it is of Lebesgue measure zero if $f_0,\ldots,f_N$ are similitudes, whereas in the general case it can happen that $A_*$ is of positive Lebesgue measure (see \cite{MZPositive}).

From the construction we have
\begin{equation*}
A_*=\bigcap_{k\in\mathbb N}\left(\bigcup_{n_1,\ldots,n_k\in\{0,\ldots,N\}}
\big[f_{n_1,\ldots,n_k}(0),f_{n_1,\ldots,n_k}(1)\big]\right).
\end{equation*}
Whenever a point $x$ can be written as
\begin{equation}\label{x}
x=\lim_{k\to\infty}f_{x_1,\ldots,x_k}(0)=\lim_{k\to\infty}f_{x_1,\ldots,x_k}(1),
\end{equation}
we say that $x$ has an \emph{address}\footnote{We have come across the term \emph{coding} as well.} (see~\cite{B1988}).

\begin{lemma}\label{attractor and addresses}
The set~$A_*$ is exactly the set of points in~$[0,1]$ that have an address.
\end{lemma}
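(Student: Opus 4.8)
The plan is to prove the two inclusions separately. For the easier direction, suppose $x$ has an address, so $x=\lim_{k\to\infty}f_{x_1,\ldots,x_k}(0)$ for some sequence $(x_k)$ with $x_k\in\{0,\ldots,N\}$. Fix $k\in\mathbb N$. Then for every $j\ge k$ the point $f_{x_1,\ldots,x_j}(0)$ lies in $[f_{x_1,\ldots,x_k}(0),f_{x_1,\ldots,x_k}(1)]$, because $f_{x_1,\ldots,x_j}(0)=f_{x_1,\ldots,x_k}\bigl(f_{x_{k+1},\ldots,x_j}(0)\bigr)$ and $f_{x_{k+1},\ldots,x_j}(0)\in[0,1]$ together with monotonicity of $f_{x_1,\ldots,x_k}$. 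Letting $j\to\infty$ and using that this interval is closed, we get $x\in[f_{x_1,\ldots,x_k}(0),f_{x_1,\ldots,x_k}(1)]$, hence $x$ belongs to the $k$-th union in the displayed formula for $A_*$. Since $k$ was arbitrary, $x\in A_*$.

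For the reverse inclusion, fix $x\in A_*$. I will build the address one digit at a time. For each $k$ the set $A_*$ is contained in $\bigcup_{n_1,\ldots,n_k}[f_{n_1,\ldots,n_k}(0),f_{n_1,\ldots,n_k}(1)]$, so we may pick, for every $k$, a tuple $(n_1^{(k)},\ldots,n_k^{(k)})$ with $x\in[f_{n_1^{(k)},\ldots,n_k^{(k)}}(0),f_{n_1^{(k)},\ldots,n_k^{(k)}}(1)]$. The key point is a \emph{consistency/nesting} observation: because the generating intervals $[f_n(0),f_n(1)]$ for $n=0,\ldots,N$ have pairwise disjoint interiors (indeed by \eqref{0<1} they are in increasing order with at most shared endpoints), the level-$k$ intervals also have pairwise disjoint interiors, and a level-$(k+1)$ interval is contained in the level-$k$ interval indexed by its first $k$ digits. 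Hence once $x$ lies in the interior of some level-$k$ interval, its first $k$ address digits are uniquely determined and the choices are automatically consistent. The mild nuisance is the boundary case: $x$ could be a shared endpoint $f_{n-1}(1)=f_n(0)$ of two adjacent intervals (and iterates thereof). I handle this by a compactness/König's-lemma argument: the tree of admissible finite tuples $(n_1,\ldots,n_k)$ with $x\in[f_{n_1,\ldots,n_k}(0),f_{n_1,\ldots,n_k}(1)]$ is finitely branching and, as just noted using $A_{k+1}\subset A_k$, every admissible tuple extends to an admissible tuple of length $k+1$; hence it is infinite, so by König's lemma it has an infinite branch $(x_k)_{k\in\mathbb N}$.

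It remains to check that this branch is genuinely an address of $x$, i.e.\ that both limits in \eqref{x} equal $x$. For every $k$ we have $x\in[f_{x_1,\ldots,x_k}(0),f_{x_1,\ldots,x_k}(1)]$, and the length of this interval is at most $c^k$, where $c\in(0,1)$ is a common Lipschitz constant of $f_0,\ldots,f_N$ (the composition of $k$ contractions each with constant $\le c$); so $|f_{x_1,\ldots,x_k}(1)-f_{x_1,\ldots,x_k}(0)|\to 0$. Since $x$ and both endpoints all lie in this shrinking interval, $f_{x_1,\ldots,x_k}(0)\to x$ and $f_{x_1,\ldots,x_k}(1)\to x$, which is exactly \eqref{x}. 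The main obstacle, such as it is, is organizing the boundary case cleanly; the König's-lemma packaging above sidesteps having to track which of two adjacent intervals to choose and makes the consistency of the digits automatic.
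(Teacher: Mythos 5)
Your proof is correct and follows essentially the same route as the paper: the paper extracts the address from the level-$k$ tuples by a Cantor diagonal argument, which is the same compactness mechanism as your K\"onig's-lemma extraction, and it likewise treats the converse inclusion as routine. One small caveat: your claim that \emph{every} admissible tuple extends to an admissible tuple of length $k+1$ is not true in general --- if, say, $f_0(0)=0$, $f_N(1)<1$ and $x=f_n(1)=f_{n+1}(0)$ is a shared endpoint, then $(n)$ is admissible but extending it would force $1\in[f_j(0),f_j(1)]$ for some $j$, which fails --- however this does not damage the argument, since K\"onig's lemma only needs the tree to be infinite and finitely branching, and infiniteness already follows from your earlier observation that each level is nonempty because $x\in A_k$ for every $k$.
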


\begin{proof}
Let $x\in A_*$. Note that for every $k\in \field{N}$ there exist $x_1^k,\ldots,x_k^k\in \{0,\ldots,N\}$ such that $x\in[f_{x_1^k,\ldots,x_k^k}(0),f_{x_1^k,\ldots,x_k^k}(1)]$ with $x_m^n$ not necessarily agreeing with $x_m^l$ for different $l$ and $n$, however, as each $x_m^l$ is chosen from the finite set $\{0,\ldots,N\}$, we may apply a Cantor diagonal argument to get a sequence as wished.

It is easy to see that every sequence $(x_k)_{k\in\mathbb N}$ of elements of $\{0,\ldots,N\}$ is an address of a point from the set $A_*$.
\end{proof}

Note that
\begin{equation}\label{01inA}
\0=\min A_*\quad\hbox{ and }\quad \1=\max A_*.
\end{equation}
Since $A_*$ is a closed set, it follows that $[\0,\1]\setminus A_*$ is an open set. Moreover,
\begin{equation}\label{A*}
[\0,\1]\setminus A_*=\bigcup_{k\in\mathbb N}\mathop{\bigcup_{0\leq n_1,\ldots,n_{k-1}\leq N}}_{0\leq n_k\leq N-1}\big(f_{n_1,\ldots,n_k}(\1),f_{n_1,\ldots,n_k+1}(\0)\big)
\end{equation}
and for all $k\in\mathbb N$, $n_1,\ldots,n_{k-1}\in\{0,\ldots,N\}$ and $n_k\in\{0,\ldots,N-1\}$ the interval $(f_{n_1,\ldots,n_k}(\1),f_{n_1,\ldots,n_k+1}(\0))$ is a connected component of the set $[\0,\1]\setminus A_*$.

Now we are in a position to show that any $\varphi\in\mathcal C$ is constant on the closure of each connected component of the set $[0,1]\setminus A_*$. We do it in two steps.

\begin{lemma}\label{lem31}
Assume that $\varphi\in\mathcal C$. Then:
\begin{enumerate}
\item\label{restriction phi to 0} $\varphi|_{[0,\0]}=0$;
\item\label{restriction phi to 1} $\varphi|_{[\1,1]}=1$;
\item\label{phi constant} $\varphi|_{[f_n(\1),f_{n+1}(\0)]}$ is constant for every $n\in\{0,\ldots,N-1\}$.
\end{enumerate}
\end{lemma}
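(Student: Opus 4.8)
The plan is to establish the three parts in order, since each feeds into the next. For part~\ref{restriction phi to 0}, I would use \cref{lem24}, which gives $\varphi(\0)=0$. Since $\varphi$ is increasing and $\varphi(0)=0$ by~\eqref{cond}, monotonicity forces $0\le\varphi(x)\le\varphi(\0)=0$ for every $x\in[0,\0]$, hence $\varphi|_{[0,\0]}=0$. Part~\ref{restriction phi to 1} is entirely symmetric: \cref{lem24} gives $\varphi(\1)=1$, and monotonicity together with $\varphi(1)=1$ pins $\varphi$ to $1$ on $[\1,1]$.

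For part~\ref{phi constant}, the key observation is that the interval $[f_n(\1),f_{n+1}(\0)]$ is (the closure of) the ``first-level'' gap of $A_*$ between the $n$-th and $(n+1)$-th pieces, corresponding to the $k=1$ term in~\eqref{A*}. I would show that $\varphi$ takes the same value at the two endpoints $f_n(\1)$ and $f_{n+1}(\0)$; monotonicity then immediately yields that $\varphi$ is constant on the whole closed interval between them. To compute $\varphi(f_n(\1))$, I would write $f_n(\1)=f_n(f_N(\1))$ using the fixed-point relation~\eqref{fixed}, apply the functional equation~\eqref{e} at the point $f_N(\1)=\1$, and then use~\eqref{i2} (the identity $\varphi(f_m(1))=\varphi(f_{m+1}(0))$ for $m\in\{0,\ldots,N-1\}$, established inside the proof of \cref{lem24}), together with $\varphi(\1)=1$ and \cref{lem22}, to telescope the sum. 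Concretely, evaluating~\eqref{e} at $x=\1$ gives $1=\varphi(\1)=\sum_{m=0}^{N}\varphi(f_m(\1))-\sum_{m=0}^{N}\varphi(f_m(0))$, and since $\varphi(f_N(\1))=1$, all the remaining differences $\varphi(f_m(\1))-\varphi(f_{m+1}(0))$ for $m\le N-1$ must vanish (each is $\le 0$ by monotonicity and~\eqref{0<1}, while their sum is $0$). Using $f_m(\1)\le f_m(1)\le f_{m+1}(0)$ and~\eqref{i2}, this sandwiches $\varphi(f_m(\1))=\varphi(f_m(1))=\varphi(f_{m+1}(0))=\varphi(f_{m+1}(\0))$ for every $m\in\{0,\ldots,N-1\}$; in particular $\varphi(f_n(\1))=\varphi(f_{n+1}(\0))$, which is exactly what we need.

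The main obstacle, such as it is, lies in the bookkeeping of part~\ref{phi constant}: one has to be careful that evaluating~\eqref{e} at $x=\1$ is legitimate (it is, since $\1\in[0,1]$ and $f_n(\1)\in[f_n(0),f_n(1)]$), and that the chain of inequalities $\varphi(f_m(0))\le\varphi(f_m(\0))\le\varphi(f_m(\1))\le\varphi(f_m(1))\le\varphi(f_{m+1}(0))$ combined with~\eqref{i2} genuinely collapses to equalities. An alternative, perhaps cleaner, route to part~\ref{phi constant} is to apply~\eqref{e} directly at a point $x\in(\1,1]$: for such $x$ we have $f_n(x)\in[f_n(\1),f_n(1)]$, and since $\varphi$ is already known to be constant equal to $1$ on $[\1,1]$ by part~\ref{restriction phi to 1}, the left-hand side is $1$; expanding the right-hand side and using that $\varphi(f_n(x))$ lies between $\varphi(f_n(\1))$ and $\varphi(f_n(1))=\varphi(f_{n+1}(0))$ forces $\varphi(f_n(x))=\varphi(f_n(\1))$ for every $n$ and every such $x$, whence $\varphi$ is constant on $f_n((\1,1])\supseteq(f_n(\1),f_n(1))$; but I do not see that this by itself reaches the left endpoint $f_n(\1)$ without a further limiting argument, so the telescoping computation above seems the most direct. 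I would therefore present the endpoint-equality argument and then invoke monotonicity to conclude.
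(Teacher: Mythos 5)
Parts \eqref{restriction phi to 0} and \eqref{restriction phi to 1} of your argument are exactly the paper's, and your overall strategy for \eqref{phi constant} --- prove equality of $\varphi$ at the two endpoints via the functional equation at $\1$ and a ``sum of one-signed terms equals zero'' argument, then invoke monotonicity --- is essentially the paper's proof (the paper phrases it as a proof by contradiction, but the content is the same). However, there is a genuine gap at the very end of \eqref{phi constant}. Your telescoping at $x=\1$ correctly yields $\varphi(f_m(\1))=\varphi(f_{m+1}(0))$ for every $m\in\{0,\ldots,N-1\}$, and the sandwich $f_m(\1)\le f_m(1)\le f_{m+1}(0)$ together with \eqref{i2} then gives $\varphi(f_m(\1))=\varphi(f_m(1))=\varphi(f_{m+1}(0))$. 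But the final equality you assert, $\varphi(f_{m+1}(0))=\varphi(f_{m+1}(\0))$, does not follow from this sandwich: since $0\le\0$, the point $f_{m+1}(\0)$ lies to the \emph{right} of $f_{m+1}(0)$, i.e.\ outside the chain $f_m(\1)\le f_m(1)\le f_{m+1}(0)$, and monotonicity only gives $\varphi(f_{m+1}(0))\le\varphi(f_{m+1}(\0))$ --- the wrong direction. As written you have only shown that $\varphi$ is constant on $[f_n(\1),f_{n+1}(0)]$, not on all of $[f_n(\1),f_{n+1}(\0)]$.

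The missing ingredient is the symmetric computation at the other fixed point: evaluate \eqref{e} at $x=\0$ and use $\varphi(\0)=0$ from \cref{lem24} to get $0=\sum_{n=0}^{N}\bigl[\varphi(f_n(\0))-\varphi(f_n(0))\bigr]$; each summand is nonnegative by monotonicity, hence $\varphi(f_n(\0))=\varphi(f_n(0))$ for every $n$, which closes the gap. This is precisely where the paper's version differs from yours: it compares $\varphi(f_n(\1))$ with $\varphi(f_{n+1}(\0))$ directly (rather than with $\varphi(f_{n+1}(0))$) and uses \eqref{e} at $\0$ together with $f_0(\0)=\0$ from \eqref{fixed} to evaluate $\sum_{n=0}^{N-1}\varphi(f_{n+1}(\0))-\sum_{n=0}^{N}\varphi(f_n(0))=\varphi(\0)-\varphi(f_0(\0))=0$. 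With that one additional step your proof is complete.
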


\begin{proof}
To prove (\ref{restriction phi to 0})~and~(\ref{restriction phi to 1}) it is enough to apply \cref{lem24} jointly with the monotonicity of $\varphi$.

Let us tackle (\ref{phi constant}). According to~\eqref{i2} and to the monotonicity of $f_0,\ldots,f_N$ and $\varphi$, we see that $\varphi(f_n(\1))\leq\varphi(f_{n+1}(\0))$.
Suppose that, contrary to our claim, there exists $n\in\{0,\ldots,N-1\}$ such that $\varphi(f_n(\1))<\varphi(f_{n+1}(\0))$. Then, using \cref{lem24}, \eqref{e}, and the first equality of \eqref{fixed} we get
\begin{equation*}
\begin{split}
1&=\varphi(\1)=
\sum_{n=0}^{N-1}\varphi(f_n(\1))+\varphi(f_N(\1))-\sum_{n=0}^{N}\varphi(f_n(0))\\
&<\sum_{n=0}^{N-1}\varphi(f_{n+1}(\0))+1-\sum_{n=0}^{N}\varphi(f_n(0))
=\varphi(\0)-\varphi(f_0(\0))+1=1,
\end{split}
\end{equation*}
a contradiction.
\end{proof}

\begin{lemma}\label{lem32}
Assume that $\varphi\in\mathcal C$. Then for all $k\in\mathbb N$, $n_1,\ldots,n_{k-1}\in\{0,\ldots,N\}$ and $n_k\in\{0,\ldots,N-1\}$ there exists  $c_{n_1,\ldots,n_k}\in[0,1]$ such that
\begin{equation}\label{const}
\varphi|_{[f_{n_1,\ldots,n_k}(\1),f_{n_1,\ldots,n_k+1}(\0)]}=c_{n_1,\ldots,n_k}.
\end{equation}
\end{lemma}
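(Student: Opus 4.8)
The plan is to prove \eqref{const} by induction on~$k$. For $k=1$ the list $n_1,\ldots,n_{k-1}$ is empty, so the left-hand side of \eqref{const} is $\varphi|_{[f_{n_1}(\1),f_{n_1+1}(\0)]}$ with $n_1\in\{0,\ldots,N-1\}$, and the assertion is exactly \cref{lem31}\,(\ref{phi constant}); the resulting constant lies in $[0,1]$ because $\varphi$ maps $[0,1]$ into $[0,1]$.

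For the passage from level~$k$ to level~$k+1$ the decisive point is purely combinatorial: every interval occurring on the left of \eqref{const} at level~$k+1$ is the image under one of $f_0,\ldots,f_N$ of an interval occurring at level~$k$. Indeed, splitting off the first index we have $f_{n_1,\ldots,n_{k+1}}=f_{n_1}\circ f_{n_2,\ldots,n_{k+1}}$ (and the same with $n_{k+1}$ replaced by $n_{k+1}+1$), and since $f_{n_1}$ is increasing and continuous it maps a closed interval $[a,b]$ onto $[f_{n_1}(a),f_{n_1}(b)]$; hence
\begin{equation*}
\big[f_{n_1,\ldots,n_{k+1}}(\1),\,f_{n_1,\ldots,n_{k+1}+1}(\0)\big]
=f_{n_1}\!\left(\big[f_{n_2,\ldots,n_{k+1}}(\1),\,f_{n_2,\ldots,n_{k+1}+1}(\0)\big]\right),
\end{equation*}
where, because $n_{k+1}\in\{0,\ldots,N-1\}$, the interval in parentheses is one of those treated at level~$k$ (see~\eqref{A*}).

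So I would fix $p\in\{0,\ldots,N\}$ and a level-$k$ interval $G=[f_{m_1,\ldots,m_k}(\1),f_{m_1,\ldots,m_k+1}(\0)]$ with $m_k\le N-1$, assume by the inductive hypothesis that $\varphi$ is constant on~$G$, and show that $\varphi$ is constant on $f_p(G)$. Given $x,y\in G$ with $x\le y$, the inductive hypothesis yields $\varphi(x)=\varphi(y)$, so subtracting \eqref{e} at~$x$ from \eqref{e} at~$y$ gives
\begin{equation*}
0=\varphi(y)-\varphi(x)=\sum_{n=0}^{N}\big(\varphi(f_n(y))-\varphi(f_n(x))\big).
\end{equation*}
Each summand is nonnegative because $f_n$ and~$\varphi$ are increasing and $x\le y$; since they add up to~$0$, each of them vanishes, and in particular $\varphi(f_p(y))=\varphi(f_p(x))$. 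As $x,y$ were arbitrary in~$G$, the function $\varphi$ is constant on $f_p(G)$, with constant value in $[0,1]$; letting $p$ and $G$ range over all admissible choices and invoking the displayed set identity, this is precisely \eqref{const} at level~$k+1$, completing the induction.

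The argument is short, and the only place that asks for a little care is the displayed set identity, i.e.\ verifying that the level-$(k+1)$ gaps are exactly the $f_n$-images of the level-$k$ gaps; once that bookkeeping is done, subtracting two instances of \eqref{e} together with monotonicity of $f_0,\ldots,f_N$ and $\varphi$ finishes the step immediately.
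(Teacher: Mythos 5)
Your proposal is correct and follows essentially the same route as the paper: induction on $k$ with base case \cref{lem31}\,(\ref{phi constant}), writing each level-$(k+1)$ gap as $f_n$ applied to a level-$k$ gap, and then using equation~\eqref{e} together with monotonicity to see that a sum of nonnegative differences vanishes term by term. The paper phrases this as a collapsing chain of inequalities between the endpoint values $\varphi(f_{n,n_1,\ldots,n_k}(\1))$ and $\varphi(f_{n,n_1,\ldots,n_k+1}(\0))$, whereas you subtract two instances of~\eqref{e} at arbitrary $x\le y$ in the gap, but this is the same argument.
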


\begin{proof}
We proceed by induction on $k$.

The first step of the induction is implied by assertion~(\ref{phi constant}) of \cref{lem31}.

Fix $k\in\mathbb N$, $n_1,\ldots,n_{k-1}\in\{0,\ldots,N\}$, $n_k\in\{0,\ldots,N-1\}$ and assume that there exists $c_{n_1,\ldots,n_k}\in[0,1]$ such that \eqref{const} holds.
Then \eqref{const}, \eqref{e} and the monotonicity of $f_0,\ldots,f_N$ and $\varphi$ imply
\begin{equation*}
\begin{split}
c_{n_1,\ldots,n_k}&=\varphi(f_{n_1,\ldots,n_k}(\1))
=\sum_{n=0}^{N}\varphi(f_{n,n_1,\ldots,n_k}(\1))-\sum_{n=0}^{N}\varphi(f_n(0))\\
&\leq\sum_{n=0}^{N}\varphi(f_{n,n_1,\ldots,n_k+1}(\0))-
\sum_{n=0}^{N}\varphi(f_n(0))=\varphi(f_{n_1,\ldots,n_k+1}(\0))\\
&=c_{n_1,\ldots,n_k}.
\end{split}
\end{equation*}
Hence
\begin{equation*}
\sum_{n=0}^{N}\varphi(f_{n,n_1,\ldots,n_k}(\1))=
\sum_{n=0}^{N}\varphi(f_{n,n_1,\ldots,n_k+1}(\0)),
\end{equation*}
and applying again the monotonicity of $f_0,\ldots,f_N$ and $\varphi$, we obtain
\begin{equation*}
\varphi(f_{n,n_1,\ldots,n_k}(\1))=\varphi(f_{n,n_1,\ldots,n_k+1}(\0))
\end{equation*}
for every $n\in\{0,\ldots,N\}$.
\end{proof}

Combining \cref{lem31,lem32} with \eqref{A*}, we get the following result.

\begin{theorem}\label{thm33}
If the set $A_*$ has Lebesgue measure zero, then $\mathcal C=\mathcal C_s$.
\end{theorem}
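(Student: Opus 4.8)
The plan is to establish the nontrivial inclusion $\mathcal C\subseteq\mathcal C_s$; the reverse inclusion $\mathcal C_s\subseteq\mathcal C$ holds by the very definition of $\mathcal C_s$. So I would fix an arbitrary $\varphi\in\mathcal C$ and show that it is singular, i.e.\ that $\varphi'=0$ Lebesgue-almost everywhere on $[0,1]$; since $\varphi$ already belongs to $\mathcal C$, this will place it in $\mathcal C_s$, and as $\varphi$ is arbitrary we will get $\mathcal C=\mathcal C_s$.

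First I would assemble what the preceding results say about $\varphi$. By \cref{lem31} the function $\varphi$ equals $0$ on $[0,\0]$, equals $1$ on $[\1,1]$, and is constant on each interval $[f_n(\1),f_{n+1}(\0)]$; by \cref{lem32} it is constant on the closure of every interval occurring in the description~\eqref{A*} of $[\0,\1]\setminus A_*$. Since those intervals, together with $[0,\0)$ and $(\1,1]$, are exactly the connected components of the open set $[0,1]\setminus A_*$ (recall $A_*$ is closed and $\0=\min A_*$, $\1=\max A_*$ by~\eqref{01inA}), every point $x\in(0,1)\setminus A_*$ lies in one such component, hence has an open neighbourhood on which $\varphi$ is constant; therefore $\varphi'(x)=0$.

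Next I would invoke the hypothesis. As $A_*$ has Lebesgue measure zero, the set $(0,1)\setminus A_*$ has full measure in $[0,1]$, so $\varphi'=0$ almost everywhere on $[0,1]$. It then remains to recall the standard fact that an increasing function whose derivative vanishes almost everywhere is singular: its absolutely continuous part, normalised to vanish at $0$, is $x\mapsto\int_0^x\varphi'(t)\,dt\equiv0$, so $\varphi$ coincides with its singular part. Hence $\varphi\in\mathcal C_s$, and combining this with $\mathcal C_s\subseteq\mathcal C$ yields $\mathcal C=\mathcal C_s$.

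I do not expect a genuine obstacle here, since the whole force of the statement is already contained in \cref{lem31,lem32}. The only point that needs a moment's care is the bookkeeping in the middle step: verifying that the intervals listed in~\eqref{A*}, together with $[0,\0]$ and $[\1,1]$, exhaust $[0,1]$ apart from the null set $A_*$, so that ``constant on each component of $[0,1]\setminus A_*$'' really does force $\varphi'=0$ almost everywhere rather than merely on a proper subset.
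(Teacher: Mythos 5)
Your proposal is correct and follows exactly the route the paper intends: combine \cref{lem31,lem32} with \eqref{A*} to see that $\varphi$ is locally constant off $A_*$, so $\varphi'=0$ almost everywhere when $A_*$ is null, forcing $\varphi$ to be singular. The paper leaves this combination unwritten, and your write-up simply supplies the details of that same argument.
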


We now give an example of contractions $f_0,\ldots, f_N$ for which the set $A_*$ is of  Lebesgue measure zero.

\begin{example}\label{ex34}
Assume additionally to our assumptions in the introduction that $f_0,\ldots,f_N$ are similitudes, i.e.\
\begin{equation*}
f_n(x)=(\beta_n-\alpha_n)x+\alpha_n
\end{equation*}
for all $x\in[0,1]$ and $n\in\{0,\ldots,N\}$, where
\begin{equation*}
0\leq\alpha_0<\beta_0\leq\alpha_1<\beta_1\leq\cdots\leq\alpha_N<\beta_N\leq 1
\quad\hbox{and}\quad
\bigcup_{n=0}^{N}\big[\alpha_n,\beta_n\big]\neq [0,1].
\end{equation*}
Clearly, \eqref{0<1} and \eqref{[0,1]} hold. Denote by $l$ the Lebesgue measure on the real line and put $d=l(A_0\setminus A_1)$. By a simple induction we get $l(A_k\setminus A_{k+1})=d(1-d)^k$ for every $k\in\mathbb N$. From \eqref{0<1} and \eqref{[0,1]} we infer that $d\in(0,1)$ and hence that
\begin{equation*}
l(A_*)=1-\sum_{k=0}^{\infty}l(A_k\setminus A_{k+1})=1-\frac{d}{1-(1-d)}=0.
\end{equation*}
\end{example}

We finish this section with one more property of the set $A_*$.

\begin{theorem}\label{perfect}
The set $A_*$ is perfect.
\end{theorem}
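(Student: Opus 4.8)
The plan is to exploit that $A_*$ is compact, hence closed (already noted above), so that the only thing left to prove is that $A_*$ has no isolated point; for this I would use the address description of \cref{attractor and addresses} together with the elementary fact that cylinders of an iterated function system built from contractions shrink to points. Concretely, I would fix a common Lipschitz constant $c\in(0,1)$ of $f_0,\ldots,f_N$, so that every composition $f_{n_1,\ldots,n_k}$ is $c^k$-Lipschitz and hence every interval $\big[f_{n_1,\ldots,n_k}(0),f_{n_1,\ldots,n_k}(1)\big]$ has length at most $c^k$. I would also record two facts: first, from \eqref{01inA} that $\0=\min A_*$ and $\1=\max A_*$ lie in $A_*$, with $\0<\1$ (since $A_*$ is uncountable, it has more than one point); second, from \eqref{structure of invariant set} that $f_n(A_*)\subseteq A_*$ for every $n$, hence $f_{n_1,\ldots,n_k}(A_*)\subseteq A_*$ for all $k$ and all $n_1,\ldots,n_k$.

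Now fix $x\in A_*$. By \cref{attractor and addresses}, $x$ has an address $(x_k)_{k\in\mathbb N}$; using the composition identity $f_{x_1,\ldots,x_j}(0)=f_{x_1,\ldots,x_k}\big(f_{x_{k+1},\ldots,x_j}(0)\big)$ for $j\geq k$ and letting $j\to\infty$, one sees that $x\in\big[f_{x_1,\ldots,x_k}(0),f_{x_1,\ldots,x_k}(1)\big]$ for every $k\in\mathbb N$. The points $u_k:=f_{x_1,\ldots,x_k}(\0)$ and $v_k:=f_{x_1,\ldots,x_k}(\1)$ lie in the same interval (because $0\leq\0<\1\leq1$ and $f_{x_1,\ldots,x_k}$ is increasing) and in $A_*$ (by the second recorded fact), and they are distinct because $f_{x_1,\ldots,x_k}$ is strictly increasing and $\0\neq\1$. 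Consequently $\abs{x-u_k}\leq c^k$ and $\abs{x-v_k}\leq c^k$, and at least one of $u_k,v_k$ is $\neq x$; this produces a sequence in $A_*\setminus\{x\}$ converging to $x$, so $x$ is not isolated. Since $x$ was arbitrary and $A_*$ is closed, $A_*$ is perfect.

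I do not anticipate a genuine obstacle: this is the standard argument that the attractor of a contractive iterated function system with at least two points has no isolated points. The only steps requiring a word of justification are the membership $x\in\big[f_{x_1,\ldots,x_k}(0),f_{x_1,\ldots,x_k}(1)\big]$, which follows from the composition identity above and a limit passage, and the strict inequality $\0<\1$, which is immediate from \eqref{01inA} once one knows $A_*$ is infinite.
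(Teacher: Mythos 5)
Your proof is correct. It shares the paper's overall skeleton --- fix $x\in A_*$, take an address via \cref{attractor and addresses}, and use the $c^k$-Lipschitz bound on $f_{x_1,\ldots,x_k}$ to locate points of $A_*$ within $c^k$ of $x$ --- but the mechanism for producing a point of $A_*\setminus\{x\}$ is genuinely different. The paper perturbs the address at a single coordinate $m$ and then must argue that the perturbed address codes a \emph{different} point; since addresses need not be unique (this is only sorted out later, in \cref{uniqueness}), that distinctness claim rests on an injectivity argument that is slightly delicate. You instead take the two points $u_k=f_{x_1,\ldots,x_k}(\0)$ and $v_k=f_{x_1,\ldots,x_k}(\1)$, which lie in $A_*$ by \cref{structure of invariant set} and \cref{01inA}, lie in the same cylinder interval as $x$, and are distinct from each other by strict monotonicity and $\0<\1$; hence at least one of them differs from $x$, with no address-uniqueness considerations needed. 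This buys you a cleaner distinctness argument at the modest cost of having to justify $\0<\1$ (which is immediate, e.g.\ because $\0$ and $\1$ are fixed points of $f_0$ and $f_N$ lying in the essentially disjoint intervals $[f_0(0),f_0(1)]$ and $[f_N(0),f_N(1)]$, or, as you note, because $A_*$ is infinite). All the supporting steps you flag --- the membership $x\in[f_{x_1,\ldots,x_k}(0),f_{x_1,\ldots,x_k}(1)]$ and the invariance $f_{n_1,\ldots,n_k}(A_*)\subseteq A_*$ --- are exactly as routine as you claim.
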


\begin{proof}
We know from its definition that $A_*$ is closed, and it is nonempty by \eqref{01inA}.

Let $x\in A_*$ and fix an address of $x$, i.e.\ a sequence $(x_k)_{k\in\mathbb N}$ of elements of $\{0,\ldots,N\}$ satisfying~\eqref{x}; we can choose such a sequence according to \cref{attractor and addresses}. To complete the proof, we need to show that in each neighbourhood of~$x$ we can find some element belonging to $A_*\setminus\{x\}$.

Fix $\varepsilon>0$ and $m\in \mathbb N$ so large that $L^{m-1}<\varepsilon$, where $L\in(0,1)$ is the largest Lipschitz constant of the given contractions
$f_0,\ldots,f_N$. Define a sequence $(y_k)_{k\in\mathbb N}$ by putting
$y_k=x_k$ for all $k\neq m$ and choosing arbitrarily $y_m\in\{0,\ldots,N\}\setminus\{x_m\}$. Then
\begin{equation*}
y=\lim_{k\to\infty}f_{y_1,\ldots,y_k}(0)\in A_*.
\end{equation*}
Since all considered contractions are injective and the addresses of points $x$ and $y$ differ only in the $m$-th coordinate, it follows that $y\neq x$. Moreover,
\begin{equation*}
\begin{split}
|x-y|&=\lim_{k\to\infty}|f_{x_1,\ldots,x_{m-1}}(f_{x_m,\ldots,x_k}(0))-f_{x_1,\ldots,x_{m-1}}(f_{y_m,\ldots,y_k}(0))|\\
&\leq L^{m-1}\lim_{k\to\infty}|f_{x_m,\ldots,x_k}(0)-f_{y_m,\ldots,y_k}(0)|\leq
L^{m-1}<\varepsilon.
\end{split}
\end{equation*}
The proof is complete.
\end{proof}


\section{Existence of solutions}\label{Existence}
In the previous section we have discussed the behaviour of functions belonging to the class $\mathcal C$, but, up to now, we do not know if $\mathcal C$ contains any function at all.
In this section, we want to show that $\mathcal C\neq\emptyset$.

Fix positive real numbers $p_0,\ldots,p_N$ such that
\begin{equation}\label{prob}
\sum_{n=0}^{N}p_n=1.
\end{equation}
Then there exists a unique Borel probability measure $\mu$ such that
\begin{equation}\label{inv}
\mu(B)=\sum_{n=0}^{N}p_n\mu(f_n^{-1}(B))
\end{equation}
for every Borel set $B\subset [0,1]$ (see \cite{H1981}; cf.\ \cite{F1997}).
From now on the letter $\mu$ will be reserved for the unique Borel probability measure satisfying~\eqref{inv} for every Borel set $B\subset [0,1]$.

Now we are interested in some properties of the measure $\mu$ that will be needed later. We begin with a well-known folklore lemma; for its proof the reader can consult \cite{LaMy1994}.

\begin{lemma}\label{lem41}
The measure $\mu$ is either singular or absolutely continuous with respect to the Lebesgue measure on $\mathbb R$.
\end{lemma}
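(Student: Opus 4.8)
The plan is to exploit the self-similarity relation \eqref{inv} together with a $0$–$1$ type argument: the absolutely continuous part and the singular part of $\mu$ (in its Lebesgue decomposition) each individually satisfy an invariance relation of the same shape, and by the uniqueness built into \eqref{inv} one of the two parts must vanish. Concretely, I would first write the Lebesgue decomposition $\mu=\mu_a+\mu_s$, where $\mu_a$ is absolutely continuous and $\mu_s$ is singular with respect to Lebesgue measure $l$ on $[0,1]$. The key observation is that because each $f_n$ is a strictly increasing contraction with continuous increasing inverse, $f_n$ is a homeomorphism onto $[f_n(0),f_n(1)]$, and by the footnote on nonsingularity the pushforward and pullback operations respect the classes ``absolutely continuous'' and ``singular'' \emph{provided} the maps are nonsingular. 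Here, however, we do not assume nonsingularity of the $f_n$, so I would instead argue more carefully, following \cite{LaMy1994}.

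The cleaner route, and the one I expect the cited reference takes, is via the transfer (Perron–Frobenius–type) operator or directly via the structure of the iterated function system. First I would note that $\mu$ is supported on the attractor $A_*$ (by \eqref{structure of invariant set} and uniqueness). Then I would decompose $[0,1]$ into the disjoint pieces $f_0([0,1]),\ldots,f_N([0,1])$ together with the gaps, and use \eqref{inv} to express $\mu$ restricted to $f_n([0,1])$ as $p_n$ times the pushforward $(f_n)_*\mu$. Iterating, for every word $n_1,\ldots,n_k$ the measure $\mu$ restricted to $f_{n_1,\ldots,n_k}([0,1])$ equals $p_{n_1}\cdots p_{n_k}\,(f_{n_1,\ldots,n_k})_*\mu$. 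Now apply the Lebesgue decomposition: write $\theta\in[0,1]$ for the $\mu$-measure of the absolutely continuous part, $\theta=\mu_a([0,1])$. Using the mutual singularity of $\mu_a$ and $\mu_s$ and a density/martingale argument along the nested cylinders $f_{n_1,\ldots,n_k}([0,1])$, one shows that the quantity $\theta$ is invariant under the averaging induced by \eqref{inv}, i.e.\ it satisfies a fixed-point equation whose only solutions are $\theta=0$ or $\theta=1$.

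The step I expect to be the main obstacle is making rigorous the claim that the Lebesgue decomposition is ``preserved'' by the IFS action when the maps $f_n$ are merely increasing contractions and not assumed nonsingular: a priori $(f_n)_*\mu_a$ need not be absolutely continuous, and $(f_n)_*\mu_s$ need not be singular, because $f_n^{-1}$ need not be absolutely continuous. The way around this is to work with a fixed Borel set $S$ carrying $\mu_s$ with $l(S)=0$, consider its full preimage structure $\bigcup_{n_1,\ldots,n_k} f_{n_1,\ldots,n_k}(S)$ versus $\bigcup f_{n_1,\ldots,n_k}(S^c)$, and track $\mu(S)$ through \eqref{inv}; the contraction property forces the cylinder diameters to zero, so a Lebesgue-density argument at $\mu_a$-a.e.\ point pins down $\theta\in\{0,1\}$. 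Since the paper explicitly defers to \cite{LaMy1994} for the proof, I would in the write-up simply cite it; but the sketch above is the argument I would reconstruct if pressed. In particular, the conclusion is exactly the dichotomy: $\mu=\mu_a$ (absolutely continuous) or $\mu=\mu_s$ (singular), as claimed.
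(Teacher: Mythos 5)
The paper offers no proof of \cref{lem41} at all---it defers entirely to \cite{LaMy1994}---so there is no in-paper argument to measure yours against; what follows compares your sketch with the standard ``pure type'' argument that the citation stands for. Your overall strategy (Lebesgue-decompose $\mu=\mu_a+\mu_s$, track the decomposition through the Markov operator $P\nu=\sum_{n=0}^{N}p_n\,\nu\circ f_n^{-1}$ coming from \eqref{inv}, and invoke the uniqueness of the invariant probability measure) is the right one. But your sketch contains one claim that is false, and false in exactly the direction that matters: you assert that $(f_n)_*\mu_s$ need not be singular because $f_n^{-1}$ need not be absolutely continuous. In fact $(f_n)_*\mu_s$ \emph{is} always singular here: if $S$ is a Borel set carrying $\mu_s$ with $l(S)=0$, then $(f_n)_*\mu_s$ is carried by $f_n(S)$, and each $f_n$, being a contraction and hence Lipschitz, maps Lebesgue-null sets to Lebesgue-null sets. (Your worry is legitimate only for the other half: $(f_n)_*\mu_a$ may indeed fail to be absolutely continuous when $f_n$ is not nonsingular---but that half is never needed.)

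This is not a pedantic correction, because preservation of the singular part is precisely what closes the argument, and it replaces the vague ``density/martingale argument'' and the unexplained ``fixed-point equation whose only solutions are $\theta=0$ or $\theta=1$''; as written, that last step is an assertion, not a proof---you never say what equation $\theta$ satisfies or why $0$ and $1$ are its only solutions. The clean finish is: from $\mu=P\mu=P\mu_a+P\mu_s$ with $P\mu_s$ singular, uniqueness of the Lebesgue decomposition gives $\mu_s=(P\mu_a)_s+P\mu_s\geq P\mu_s$; since $P$ preserves total mass, $\mu_s([0,1])=P\mu_s([0,1])$, whence $\mu_s=P\mu_s$. If $\mu_s\neq 0$, then $\mu_s/\mu_s([0,1])$ is an invariant probability measure and so, by the uniqueness in \eqref{inv}, equals $\mu$; thus $\mu$ is singular and $\mu_a=0$. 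Otherwise $\mu=\mu_a$ is absolutely continuous. I would either write this out or, as the authors do, simply cite \cite{LaMy1994}; what you should not do is leave the dichotomy resting on the unproved ``$\theta\in\{0,1\}$'' step.
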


To formulate the next lemma, which is also well-known (see e.g.\ \cite{B1988}), we recall that the support of the measure $\mu$ is the set $\supp\mu$ of all points $x\in[0,1]$ such that $\mu([x-\varepsilon,x+\varepsilon])>0$ for every $\varepsilon>0$.

\begin{lemma}\label{lem42}
We have $\supp\mu=A_*$. In particular, $\mu([0,1]\setminus A_*)=0$.
\end{lemma}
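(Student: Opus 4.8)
The plan is to show the two set inclusions $\supp\mu\subseteq A_*$ and $A_*\subseteq\supp\mu$ separately; the ``in particular'' clause is then immediate, since $[0,1]\setminus A_*$ is open and disjoint from $\supp\mu$, so it is a $\mu$-null set by the very definition of the support.

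For the inclusion $\supp\mu\subseteq A_*$, I would first argue that $\mu(A_*)=1$, equivalently $\mu([0,1]\setminus A_*)=0$. Recall $A_k=\bigcup_{n=0}^N f_n(A_{k-1})$ with $A_0=[0,1]$, and $A_*=\bigcap_k A_k$. I claim $\mu(A_k)=1$ for every $k$ by induction: trivially $\mu(A_0)=\mu([0,1])=1$, and if $\mu(A_{k-1})=1$ then, using \eqref{inv} together with the fact that $f_n([0,1])\subseteq[0,1]$ so $f_n^{-1}(f_n(A_{k-1}))\supseteq A_{k-1}$, we get $\mu(f_n(A_{k-1}))\geq p_n\mu(f_n^{-1}(f_n(A_{k-1})))\geq p_n\mu(A_{k-1})=p_n$; summing over $n$ and using \eqref{prob} gives $\mu(A_k)\geq\mu\big(\bigcup_{n} f_n(A_{k-1})\big)=1$ once one checks the images overlap only in null sets — actually it is cleaner to use \eqref{inv} directly: $\mu(A_k)=\sum_n p_n\mu(f_n^{-1}(A_k))$ and $f_n^{-1}(A_k)\supseteq A_{k-1}$ because $f_n(A_{k-1})\subseteq A_k$, so $\mu(f_n^{-1}(A_k))\geq\mu(A_{k-1})=1$, whence $\mu(A_k)=\sum_n p_n=1$. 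Since the $A_k$ decrease, $\mu(A_*)=\lim_k\mu(A_k)=1$. Consequently $[0,1]\setminus A_*$ is an open null set, and every point of it has a neighbourhood of measure zero, so it is disjoint from $\supp\mu$; thus $\supp\mu\subseteq A_*$.

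For the reverse inclusion $A_*\subseteq\supp\mu$, fix $x\in A_*$ and an address $(x_k)_{k\in\mathbb N}$ of $x$ as furnished by \cref{attractor and addresses}, so that $x=\lim_k f_{x_1,\ldots,x_k}(0)=\lim_k f_{x_1,\ldots,x_k}(1)$. Given $\varepsilon>0$, pick $k$ so large that $L^k<\varepsilon$, where $L\in(0,1)$ bounds the Lipschitz constants of all $f_n$; then the interval $I_k=[f_{x_1,\ldots,x_k}(0),f_{x_1,\ldots,x_k}(1)]$ has length at most $L^k<\varepsilon$ and contains $x$, so $I_k\subseteq[x-\varepsilon,x+\varepsilon]$. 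It remains to see $\mu(I_k)>0$. Iterating \eqref{inv} $k$ times yields $\mu(B)\geq p_{x_1}\cdots p_{x_k}\,\mu(f_{x_k}^{-1}\circ\cdots\circ f_{x_1}^{-1}(B))$ for any Borel $B$ (keeping only the single term corresponding to the index string $x_1,\ldots,x_k$). Taking $B=I_k=f_{x_1,\ldots,x_k}([0,1])$ and using that $f_{x_k}^{-1}\circ\cdots\circ f_{x_1}^{-1}(f_{x_1,\ldots,x_k}([0,1]))\supseteq[0,1]$, we obtain $\mu(I_k)\geq p_{x_1}\cdots p_{x_k}\,\mu([0,1])=p_{x_1}\cdots p_{x_k}>0$, since all $p_n$ are strictly positive by \eqref{prob}. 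Hence $\mu([x-\varepsilon,x+\varepsilon])\geq\mu(I_k)>0$ for every $\varepsilon>0$, so $x\in\supp\mu$.

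The only delicate point is the correct bookkeeping with $f_n^{-1}$ in \eqref{inv}: one must be careful that $f_n^{-1}(B)$ means the full preimage in $[0,1]$, not the inverse of the range-restricted bijection, so that inclusions like $f_n^{-1}(f_n(A))\supseteq A$ hold without extra hypotheses. With that understood, both inclusions are short, and the bulk of the ``well-known'' character of the lemma is exactly that $\mu$ is supported on the attractor of a probabilistic IFS with strictly positive weights. I would present the two inclusions as above and then remark that $\mu([0,1]\setminus A_*)=0$ follows at once.
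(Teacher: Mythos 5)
Your proof is correct. Note that the paper does not prove this lemma at all --- it is stated as well known with a reference to the literature --- so there is no in-text argument to compare against; what you have written is precisely the standard argument that the citation delegates. Both halves check out: the induction $\mu(A_k)=1$ via $f_n^{-1}(A_k)\supseteq A_{k-1}$ and \eqref{inv} is clean (each $A_k$ is compact, hence Borel, so the measures are defined), continuity from above gives $\mu(A_*)=1$, and openness of the complement then yields $\supp\mu\subseteq A_*$ together with the ``in particular'' clause. For the reverse inclusion, the lower bound $\mu\bigl(f_{x_1,\ldots,x_k}([0,1])\bigr)\geq p_{x_1}\cdots p_{x_k}$ obtained by keeping a single term at each of the $k$ applications of \eqref{inv} is exactly right, and the containment $x\in I_k\subseteq[x-\varepsilon,x+\varepsilon]$ follows from the monotonicity observation recorded as \eqref{xx} in the paper. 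Your closing caveat about $f_n^{-1}$ denoting the full preimage is well taken and is indeed the only place where sloppiness could creep in; as written, the argument is complete.
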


\begin{lemma}\label{lem43}
The measure $\mu$ is continuous.
\end{lemma}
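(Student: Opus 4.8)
The plan is to prove the stronger-sounding (but equivalent) statement that $\mu$ has no atoms, i.e.\ $\mu(\{x\})=0$ for every $x\in[0,1]$. Suppose, on the contrary, that some point carries positive mass. Since $\mu$ is a finite measure, it has at most countably many atoms and the sum of their masses is at most $\mu([0,1])=1$; hence there is a \emph{largest} atomic mass, say $a:=\max\{\mu(\{x\}):x\in[0,1]\}>0$, attained at some point $x_0\in[0,1]$.

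The heart of the argument is to feed $B=\{x_0\}$ into the invariance identity~\eqref{inv}, obtaining
\begin{equation*}
a=\mu(\{x_0\})=\sum_{n=0}^{N}p_n\,\mu\!\left(f_n^{-1}(\{x_0\})\right).
\end{equation*}
Each $f_n$ is strictly increasing, hence injective, so $f_n^{-1}(\{x_0\})$ is either empty or a single point; in either case $\mu(f_n^{-1}(\{x_0\}))\le a$ by maximality of $a$. Since the $p_n$ are positive and $\sum_{n=0}^{N}p_n=1$, equality must hold term by term, so $\mu(f_n^{-1}(\{x_0\}))=a$ for every $n\in\{0,\dots,N\}$; in particular $f_n^{-1}(\{x_0\})\ne\emptyset$, i.e.\ $x_0\in f_n([0,1])=[f_n(0),f_n(1)]$ for every $n$. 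Looking at $n=0$ and $n=N$ we get $f_N(0)\le x_0\le f_0(1)$. If $N\ge 2$, then \eqref{0<1} gives the \emph{strict} inequality $f_0(1)\le f_1(0)<f_1(1)\le f_2(0)\le\cdots\le f_N(0)$, i.e.\ $f_0(1)<f_N(0)$, contradicting $f_N(0)\le x_0\le f_0(1)$; this already finishes the proof when $N\ge 2$.

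The one delicate point is the ``junction'' case $N=1$, where $f_0(1)\le f_1(0)$ need not be strict, so the above only yields $x_0=f_0(1)=f_1(0)$, which is genuinely possible. Here one bootstraps: since $x_0=f_0(1)$ and $f_0$ is injective, $f_0^{-1}(\{x_0\})=\{1\}$, so the equality $\mu(f_0^{-1}(\{x_0\}))=a$ reads $\mu(\{1\})=a$. Thus $1$ is itself an atom of maximal mass, and re-running the displayed computation with $x_0$ replaced by $1$ forces $1\in[f_0(0),f_0(1)]$, i.e.\ $f_0(1)=1$; but \eqref{0<1} gives $f_0(1)\le f_1(0)<f_1(1)\le 1$, a contradiction. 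In every case we reach a contradiction, so $\mu$ has no atoms, which is exactly the assertion that $\mu$ is continuous. (The argument also shows, as it must, that the standing hypotheses force $N\ge 1$.)
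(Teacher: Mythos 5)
Your proof is correct, but it follows a genuinely different route from the paper's. The paper proves $\mu(\{x\})=0$ pointwise: it first kills the atoms at $0,1$ and at the endpoints $f_n(0),f_n(1)$ directly from \eqref{inv}, deduces the multiplicative identity $\mu(f_n(B))=p_n\mu(B)$ (its \eqref{mu(B)}), and then, for $x\in A_*$ with address $(x_k)$, squeezes $\mu(\{x\})\le\prod_{i=1}^k p_{x_i}\to 0$; points off $A_*$ are handled by \cref{lem42}. Your maximal-atom argument is shorter and more self-contained: it needs only \eqref{inv}, the injectivity and continuity of the $f_n$ (so that $f_n^{-1}(\{x_0\})$ is at most a point and $f_n([0,1])=[f_n(0),f_n(1)]$), the positivity of the $p_n$, and the separation \eqref{0<1}; in particular it uses neither \cref{lem42} nor the hypothesis \eqref{[0,1]}, and so works verbatim in the overlapping-free setting of \cite{MZ} as well. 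All the individual steps check out: the maximum $a$ is attained because for each $\varepsilon>0$ there are at most $1/\varepsilon$ atoms of mass $\ge\varepsilon$; the term-by-term equality forces $x_0\in\bigcap_n[f_n(0),f_n(1)]$, which is empty for $N\ge2$ and at most $\{f_0(1)\}=\{f_1(0)\}$ for $N=1$, where your bootstrap to the atom at $1$ closes the case. The one thing the paper's longer proof buys that yours does not is the identity \eqref{mu(B)}, which is reused later (in \cref{measure preserving} and \cref{nu is mupi}); if one adopted your proof, that identity would still follow easily afterwards from \eqref{inv}, \eqref{0<1} and the now-established continuity of $\mu$, but it would have to be stated and proved separately. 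Your closing parenthetical is slightly misleading: the argument does not \emph{show} $N\ge1$; rather, the lemma is false for a single map (the invariant measure is then a point mass at the fixed point), so $N\ge1$ must be part of the standing hypotheses, as it is here since $N\in\mathbb N$.
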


\begin{proof}
To prove that $\mu$ is continuous it is enough to show $\mu(\{x\})=0$ for every $x\in [0,1]$.

Fix $x\in [0,1]$.

If $x\not\in A_*$, then $\mu(\{x\})=0$ by \cref{lem42}, hence we assume now that $x\in A_*$ and choose an address of~$x$, that is a sequence $(x_k)_{k\in\mathbb N}$ of elements of $\{0,\ldots,N\}$ such that \eqref{x} holds. Note that the monotonicity of $f_0,\ldots,f_N$ implies
\begin{equation}\label{xx}
f_{x_1,\ldots,x_k}(0)\leq f_{x_1,\ldots,x_{k+1}}(0)\leq x\leq
f_{x_1,\ldots,x_{k+1}}(1)\leq f_{x_1,\ldots,x_k}(1)
\end{equation}
for every $k\in\mathbb N$.

First we want to prove that
\begin{equation}\label{f_n(0)}
\mu\big(\{f_n(0)\}\big)=\mu\big(\{f_n(1)\}\big)=0
\end{equation}
for every $n\in\{0,\ldots,N\}$.

We begin with proving that
\begin{equation}\label{01}
\mu(\{0\})=\mu(\{1\})=0.
\end{equation}
	
If $f_0(0)>0$, then~\eqref{inv} gives
\begin{equation*}
\mu(\{0\})=\sum_{n=0}^{N}p_n\mu(\{f_n^{-1}(0)\})=
\sum_{n=0}^{N}p_n\mu(\emptyset)=0.
\end{equation*}
If $f_0(0)=0$, then~\eqref{inv} yields
\begin{equation*}
\mu(\{0\})=\sum_{n=0}^{N}p_n\mu(\{f_n^{-1}(0)\})=
p_0\mu(\{0\})+\sum_{n=1}^{N}p_n\mu(\emptyset)=p_0\mu(\{0\}),
\end{equation*}
and since $p_0\in(0,1)$ we conclude that $\mu(\{0\})=0$.
	
In the same way, considering two cases ($f_N(1)<1$ and $f_N(1)=1$) and using~\eqref{inv} jointly with the fact that $p_N\in(0,1)$ in the second case, we get $\mu(\{1\})=0$.
	
Using~\eqref{inv}, \eqref{0<1} and \eqref{01} we obtain
\begin{equation*}
\mu(\{f_0(0)\})=\sum_{n=0}^{N}p_n\mu(\{f_n^{-1}(f_0(0))\})=p_0\mu(\{0\})=0,
\end{equation*}
\begin{equation*}
\mu(\{f_N(1)\})=\sum_{n=0}^{N}p_n\mu(\{f_n^{-1}(f_N(1))\})=p_N\mu(\{1\})=0
\end{equation*}
and
\begin{equation*}
\begin{split}
\mu(\{f_m(1),f_{m+1}(0)\})&= \sum_{n=0}^{N}p_n\mu(\{f_n^{-1}(f_m(1)),f_n^{-1}(f_{m+1}(0))\})\\
&\leq 2(p_m\mu(\{1\})+p_{m+1}\mu(\{0\}))=0
\end{split}
\end{equation*}
for every $m\in\{0,\ldots,N-1\}$.

By \eqref{f_n(0)}~and~\eqref{0<1}, equality~\eqref{inv} implies
\begin{equation}\label{mu(B)}
\mu(f_n(B))=p_n\mu(B)
\end{equation}
for all $n\in\{0,\ldots,N\}$ and Borel sets $B\subset[0,1]$.

Finally, applying \eqref{xx} and $k$ times equality~\eqref{mu(B)} jointly with the fact that $\mu$ is a probability measure we get
\begin{equation*}
\begin{split}
\mu(\{x\})&=\mu\left(\bigcap_{k\in\mathbb N} \big[f_{x_1,\ldots,x_k}(0), f_{x_1,\ldots,x_k}(1)\big]\right)\\
&=\lim_{k\to\infty}\mu\left(\big[f_{x_1,\ldots,x_k}(0), f_{x_1,\ldots,x_k}(1)\big]\right)\\
&=\lim_{k\to\infty}\prod_{i=1}^k p_{x_i}
\leq\lim_{k\to\infty}{\left(\max\{p_0,\ldots,p_N\}\right)}^k=0.
\end{split}
\end{equation*}
The proof is complete.
\end{proof}

Combining \cref{lem42,lem43} with~\eqref{A*} we get the following corollary.

\begin{corollary}\label{cor44}
The measure $\mu$ vanishes on each of the following intervals:
$[0,\0]$, $[\1,1]$, $[f_{n_1,\ldots,n_k}(\1),f_{n_1,\ldots,n_k+1}(\0)]$ with $k\in\mathbb N$, $n_1,\ldots,n_{k-1}\in\{0,\ldots,N\}$ and $n_k\in\{0,\ldots,N-1\}$.
\end{corollary}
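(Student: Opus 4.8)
The plan is to reduce \cref{cor44} directly to the three lemmas already proved, using the description of $[\0,\1]\setminus A_*$ from \eqref{A*} together with \cref{lem42,lem43}. First I would observe that the intervals $[0,\0]$ and $[\1,1]$ differ from the single points $\{\0\}$ and $\{\1\}$ only by the portions $[0,\0)$ and $(\1,1]$, which lie in $[0,1]\setminus\supp\mu$ since $\0=\min A_*$ and $\1=\max A_*$ by \eqref{01inA}; hence \cref{lem42} kills those parts, and \cref{lem43} (continuity of $\mu$) kills the endpoints $\{\0\}$ and $\{\1\}$. So $\mu([0,\0])=\mu(\{\0\})=0$ and similarly $\mu([\1,1])=0$.

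Next, for a general interval $I=[f_{n_1,\ldots,n_k}(\1),f_{n_1,\ldots,n_k+1}(\0)]$ with $k\in\mathbb N$, $n_1,\ldots,n_{k-1}\in\{0,\ldots,N\}$ and $n_k\in\{0,\ldots,N-1\}$, I would split $I$ into its interior $(f_{n_1,\ldots,n_k}(\1),f_{n_1,\ldots,n_k+1}(\0))$ and the two endpoints. The open interval is precisely one of the connected components of $[\0,\1]\setminus A_*$ appearing in \eqref{A*}, hence it is disjoint from $A_*=\supp\mu$, so \cref{lem42} gives that $\mu$ vanishes on it. For the two endpoints $f_{n_1,\ldots,n_k}(\1)$ and $f_{n_1,\ldots,n_k+1}(\0)$, continuity of $\mu$ (\cref{lem43}) gives $\mu(\{f_{n_1,\ldots,n_k}(\1)\})=\mu(\{f_{n_1,\ldots,n_k+1}(\0)\})=0$. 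Adding the three contributions yields $\mu(I)=0$, which is what we want.

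The whole argument is essentially a bookkeeping exercise, so I do not expect a genuine obstacle; the only point that needs a little care is making sure that the open interval in question really is a connected component of $[\0,\1]\setminus A_*$ and not something larger or overlapping, but this is exactly the content of the sentence following \eqref{A*} in the excerpt, so it may be quoted. An even cleaner packaging, if one prefers, is to note that every interval in the list is, up to its two endpoints, contained in $[0,1]\setminus A_*$: for $[0,\0]$ use $[0,\0)\subset[0,1]\setminus A_*$ by \eqref{01inA}, for $[\1,1]$ use $(\1,1]\subset[0,1]\setminus A_*$, and for the remaining intervals use \eqref{A*} directly. Then a single appeal to \cref{lem42} handles all the open parts simultaneously and a single appeal to \cref{lem43} handles all the (finitely many, or at worst countably many) endpoints, giving the corollary at once.
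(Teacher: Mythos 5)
Your argument is exactly the paper's: the corollary is stated there with the one-line justification ``Combining \cref{lem42,lem43} with~\eqref{A*}'', i.e.\ the open parts of the intervals lie outside $\supp\mu=A_*$ and the endpoints are $\mu$-null by continuity of $\mu$. Your write-up just makes that bookkeeping explicit, so it is correct and takes essentially the same route.
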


Define the function $\varphi\colon[0,1]\to[0,1]$ by
\begin{equation*}
\varphi(x)=\mu([0,x]).
\end{equation*}
From now on the letter $\varphi$ will be reserved for the just defined function.

Repeating the proof of Theorem~3.3 from~\cite{MZ} we get the following result.

\begin{theorem}\label{thm41}
Either $\varphi\in\mathcal C_a$ or $\varphi\in\mathcal C_s$.
\end{theorem}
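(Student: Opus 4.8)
The plan is to show two things: first, that $\varphi\in\mathcal C$ (so that it is a genuine solution of~\eqref{e} with the right boundary behaviour), and second, that $\varphi$ is either absolutely continuous or singular, with no mixed part — which is where \cref{lem41} does the work. The overall strategy mirrors the proof of Theorem~3.3 in~\cite{MZ}, adapting it to the current setting in which $A_*$ need no longer be all of $[0,1]$.

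First I would verify the boundary conditions and monotonicity. Since $\mu$ is a Borel probability measure on $[0,1]$, the function $x\mapsto\mu([0,x])$ is clearly nondecreasing with values in $[0,1]$; we have $\varphi(0)=\mu(\{0\})=0$ by~\eqref{01} (part of the proof of \cref{lem43}), and $\varphi(1)=\mu([0,1])=1$, so~\eqref{cond} holds. Continuity of $\varphi$ is equivalent to $\mu$ having no atoms, which is exactly the content of \cref{lem43}. Hence $\varphi\in\mathcal C$. Next I would check that $\varphi$ satisfies the functional equation~\eqref{e}. For this the key identity is~\eqref{mu(B)}, namely $\mu(f_n(B))=p_n\mu(B)$ for every Borel $B\subset[0,1]$ and $n\in\{0,\ldots,N\}$. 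Fix $x\in[0,1]$. Using monotonicity of each $f_n$, the preimage $f_n^{-1}([0,x])$ is an interval of the form $[0,z_n]$ (or empty, or all of $[0,1]$), and a short computation using~\eqref{0<1} — the intervals $[f_n(0),f_n(1)]$ are ordered and essentially disjoint — lets one write $\mu([0,x])=\sum_{n=0}^N \mu([0,x]\cap[f_n(0),f_n(1)])$ up to the $\mu$-null overlaps handled by~\eqref{f_n(0)}, and then~\eqref{mu(B)} converts each term into $p_n$ times a value of $\mu([0,\cdot])$ evaluated at an appropriate point, which after bookkeeping yields precisely $\varphi(x)=\sum_{n=0}^N\varphi(f_n(x))-\sum_{n=0}^N\varphi(f_n(0))$. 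This is the step that requires care, since one must track which intervals $[f_n(0),f_n(1)]$ the point $x$ lies to the left of, inside of, or to the right of, and invoke \cref{cor44} (or~\eqref{f_n(0)}) to discard measure-zero boundary contributions; it is the main bookkeeping obstacle but is entirely parallel to~\cite{MZ}.

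Finally, for the dichotomy I would appeal directly to \cref{lem41}: the measure $\mu$ is either singular or absolutely continuous with respect to Lebesgue measure on $\mathbb R$. If $\mu$ is absolutely continuous, then $\varphi(x)=\mu([0,x])=\int_0^x g\,dl$ for some nonnegative $g\in L^1([0,1])$, so $\varphi$ is absolutely continuous; combined with $\varphi\in\mathcal C$ this gives $\varphi\in\mathcal C_a$. If $\mu$ is singular, then there is a Lebesgue-null Borel set carrying all of $\mu$, and $\varphi'=0$ $l$-almost everywhere (being the derivative of the singular distribution function of a singular measure), so $\varphi\in\mathcal C_s$. Since these two cases are exhaustive by \cref{lem41}, exactly one of $\varphi\in\mathcal C_a$, $\varphi\in\mathcal C_s$ holds, completing the proof. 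The only genuinely nontrivial ingredient is the verification of~\eqref{e}, and even that is a routine adaptation once~\eqref{mu(B)} is in hand.
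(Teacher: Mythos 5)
Your proposal is correct and follows essentially the same route as the paper, which simply invokes the proof of Theorem~3.3 of~\cite{MZ}: continuity of $\varphi$ from \cref{lem43}, the functional equation from the self-similarity of $\mu$, and the dichotomy from \cref{lem41}. For the verification of~\eqref{e} the bookkeeping you describe can be bypassed entirely by writing $\varphi(f_n(x))-\varphi(f_n(0))=\mu\big(\big[f_n(0),f_n(x)\big]\big)=\mu\big(f_n([0,x])\big)=p_n\varphi(x)$ via~\eqref{mu(B)} and \cref{lem43}, and summing over $n$ using~\eqref{prob}.
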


As a consequence of \cref{thm41}, we have $\varphi\in\mathcal C$. \cref{lem42} implies that $\varphi$ cannot be constant on an open interval having nonempty intersection with the attractor $A_*$. Therefore, all the constants $c_{n_1,\ldots,n_k}$  occurring in the assertion of \cref{lem32} (associated with the above constructed $\varphi$) are pairwise different and belong to the open interval $(0,1)$.

We finish this section by giving a precise formula for $\varphi$.

\begin{theorem}\label{formula}
Assume that $x\in[0,1]$.
\begin{enumerate}[label=(\roman*),ref=(\roman*)]
\item If $x\in[0,\0]$, then $\varphi(x)=0$.\label{phi is zero}
\item If $x\in[\1,1]$, then $\varphi(x)=1$.\label{phi is one}
\item If $x\in A_*$ and $(x_l)_{l\in\mathbb N}$ is an address of $x$, then
\begin{equation*}
\varphi(x)=\sum_{l=1}^{\infty}\sign(x_l)\left[\prod_{n=0}^{N}
p_n^{\#\{i\in\{1,\ldots,l-1\}:x_i=n\}}\cdot
\sum_{n=0}^{x_l-1}p_n\right].\label{formula for phi if address}
\end{equation*}	
\item If $x\in[f_{x_1,\ldots,x_k}(\1),f_{x_1,\ldots,x_k+1}(\0)]$ with  $k\in\mathbb N$, $x_1,\ldots,x_{k-1}\in\{0,\ldots,N\}$ and $x_k\in\{0,\ldots,N-1\}$, then
\begin{equation*}
\begin{split}
\varphi(x)=&\sum_{l=1}^{k}\sign(x_l)\left[ \prod_{n=0}^{N}p_n^{\#\{i\in\{1,\ldots,l-1\}:x_i=n\}}\cdot
\sum_{n=0}^{x_l-1}p_n\right]\\
&+\prod_{n=0}^{N}p_n^{\#\{i\in\{1,\ldots,k\}:x_i=n\}}.
\end{split}
\end{equation*}\label{formula for phi if not address}
\end{enumerate}
\end{theorem}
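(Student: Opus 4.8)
The plan is to establish all four formulas by unravelling the self-similar structure of $\mu$ recorded in \eqref{mu(B)}, that is, $\mu(f_n(B)) = p_n\mu(B)$ for Borel $B \subset [0,1]$. Parts \ref{phi is zero} and \ref{phi is one} are immediate: by \cref{cor44} (or already \cref{lem31} applied to the specific $\varphi$), $\mu$ vanishes on $[0,\0]$ and on $[\1,1]$, so $\varphi(x) = \mu([0,x])$ equals $0$ on the first interval and, using $\mu([0,1])=1$ together with $\mu([\1,1])=0$, equals $1$ on the second. The substance of the theorem is parts \ref{formula for phi if address} and \ref{formula for phi if not address}, and I would derive the finite formula \ref{formula for phi if not address} first, then obtain \ref{formula for phi if address} as a limit.

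For \ref{formula for phi if not address}, fix $k$ and a word $x_1,\dots,x_k$ with $x_k \le N-1$, and consider any $x$ in the component-closure $[f_{x_1,\dots,x_k}(\1), f_{x_1,\dots,x_k+1}(\0)]$. The key geometric observation is that $[0,x]$ decomposes, up to $\mu$-null boundary pieces, as the disjoint union of the blocks lying entirely to the left of this component together with the block $f_{x_1,\dots,x_k}([\0,\1])$ itself (on which $\varphi$ is constant by \cref{lem32}, so $x$ may be taken to be its right endpoint $f_{x_1,\dots,x_k+1}(\0)$, equivalently $\mu([0,x]) = \mu([0, f_{x_1,\dots,x_k}(\1)])$). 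Writing $[0, f_{x_1,\dots,x_k}(\1)]$ as a telescoping union over $l = 1,\dots,k$ of the pieces $[f_{x_1,\dots,x_{l-1}}(\0), f_{x_1,\dots,x_{l-1}, x_l}(\0)]$ one level at a time (and noting $\mu$ of the leftover interval $[0,\0]$ is zero), each such piece equals the image under $f_{x_1,\dots,x_{l-1}}$ of $[\0, f_{x_l}(\0)]$; when $x_l \ge 1$ this last interval is, modulo $\mu$-null sets, $\bigcup_{n=0}^{x_l - 1} f_n([\0,\1])$, which by \cref{lem42} and $\mu(A_*)=1$ carries mass $\sum_{n=0}^{x_l-1} p_n$, while for $x_l = 0$ it is $\mu$-null, accounting for the factor $\sign(x_l)$. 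Applying \eqref{mu(B)} $l-1$ times to strip off $f_{x_1,\dots,x_{l-1}}$ multiplies this by $\prod_{n=0}^N p_n^{\#\{i \le l-1 : x_i = n\}}$. Finally the block $f_{x_1,\dots,x_k}([\0,\1])$ contributes $\mu(f_{x_1,\dots,x_k}(A_*)) = \prod_{n=0}^N p_n^{\#\{i \le k : x_i = n\}}$ by $k$ applications of \eqref{mu(B)}. Summing gives \ref{formula for phi if not address}.

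For \ref{formula for phi if address}, let $x \in A_*$ with address $(x_l)_{l\in\mathbb N}$. For each $k$, the point $x$ lies in $[f_{x_1,\dots,x_k}(\0), f_{x_1,\dots,x_k}(\1)]$, and continuity of $\mu$ (\cref{lem43}) — or directly the estimate $\mu([f_{x_1,\dots,x_k}(\0),f_{x_1,\dots,x_k}(\1)]) = \prod_{i=1}^k p_{x_i} \to 0$ already computed in the proof of \cref{lem43} — shows $\varphi(x) = \lim_{k\to\infty} \mu([0, f_{x_1,\dots,x_k}(\0)])$. The same telescoping-and-rescaling argument as above identifies $\mu([0, f_{x_1,\dots,x_k}(\0)])$ with the partial sum $\sum_{l=1}^k \sign(x_l)\big[\prod_{n=0}^N p_n^{\#\{i \le l-1 : x_i = n\}} \sum_{n=0}^{x_l-1} p_n\big]$ (this time there is no trailing block, since we stop at the left endpoint $f_{x_1,\dots,x_k}(\0)$), and letting $k \to \infty$ yields the series in \ref{formula for phi if address}; its convergence is guaranteed because the partial sums are $\varphi$-values in $[0,1]$, monotone in $k$.

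The main obstacle I anticipate is purely bookkeeping rather than conceptual: one must be careful that the various interval endpoints used in the telescoping decompositions differ only by $\mu$-null sets, so that equalities of $\mu$-measures are not spoiled by overlaps or by the gap intervals of $[\0,\1]\setminus A_*$; this is exactly what \cref{cor44}, \cref{lem42} and \cref{lem43} are for, and invoking them at each step keeps the argument clean. A secondary point requiring care is the treatment of the degenerate digits $x_l = 0$ (where the incremental interval collapses to a $\mu$-null set), which is precisely encoded by the factor $\sign(x_l)$ and must be tracked consistently through both the finite sum and the infinite series.
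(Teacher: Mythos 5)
Your proposal is correct, but it organizes the argument in the opposite order from the paper. The paper proves \ref{formula for phi if address} by referring to the (omitted) proof of Theorem~3.6 of \cite{MZ}, and then obtains \ref{formula for phi if not address} as a one-line consequence: since $(x_1,\ldots,x_k,N,N,\ldots)$ is an address of $f_{x_1,\ldots,x_k}(\1)$ and $\varphi$ is constant on the gap by \cref{lem32}, one plugs this address into the series of \ref{formula for phi if address} and sums the geometric tail $\sum_{l=k+1}^{\infty}\bigl(\prod_{n}p_n^{\#\{i\le k:x_i=n\}}\bigr)p_N^{l-k-1}(1-p_N)=\prod_{n}p_n^{\#\{i\le k:x_i=n\}}$. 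You instead prove \ref{formula for phi if not address} directly by telescoping $[0,f_{x_1,\ldots,x_k}(\1)]$ into the pieces $f_{x_1,\ldots,x_{l-1}}([\0,f_{x_l}(\0)])$ plus the trailing block $f_{x_1,\ldots,x_k}([\0,\1])$ and applying \eqref{mu(B)} repeatedly, and then deduce \ref{formula for phi if address} as the $k\to\infty$ limit of the left-endpoint measures, controlled by $\mu([f_{x_1,\ldots,x_k}(0),f_{x_1,\ldots,x_k}(1)])=\prod_{i=1}^k p_{x_i}\to 0$. Your route has the advantage of being self-contained (it does not lean on the external proof from \cite{MZ}, and in fact reconstructs what that proof must do), at the cost of more bookkeeping with $\mu$-null overlaps; the paper's derivation of \ref{formula for phi if not address} from \ref{formula for phi if address} is slicker once \ref{formula for phi if address} is granted. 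One small wording slip worth fixing: in your treatment of \ref{formula for phi if not address} you say $\varphi$ is constant on the block $f_{x_1,\ldots,x_k}([\0,\1])$ by \cref{lem32}; what \cref{lem32} gives (and what you actually use) is constancy on the adjacent gap $[f_{x_1,\ldots,x_k}(\1),f_{x_1,\ldots,x_k+1}(\0)]$, which lets you replace $x$ by $f_{x_1,\ldots,x_k}(\1)$.
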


\begin{proof}
Assertions \cref{phi is zero} and \cref{phi is one} are trivially implied by assertions \eqref{restriction phi to 0} and \eqref{restriction phi to 1} of \cref{lem31}. The proof of assertion~\cref{formula for phi if address} follows very closely the proof of Theorem 3.6 from \cite{MZ}, so we omit it. Assertion~\cref{formula for phi if not address} is a consequence of \cref{lem32}, the fact that
$(x_1,\ldots,x_k,N,\ldots)$ is the address of the point $f_{x_1,\ldots,x_k}(\1)$ and assertion~\cref{formula for phi if address}; indeed
\begin{equation*}
\begin{split}
\varphi(x)&=\varphi(f_{x_1,\ldots,x_k}(\1))=\sum_{l=1}^{k}\sign(x_l)\left[ \prod_{n=0}^{N}p_n^{\#\{i\in\{1,\ldots,l-1\}:x_i=n\}}\cdot
\sum_{n=0}^{x_l-1}p_n\right]\\
&\hspace{3ex}+\sum_{l=k+1}^{\infty} \left[\prod_{n=0}^{N}p_n^{\#\{i\in\{1,\ldots,k\}:x_i=n\}}p_N^{l-k-1}\right]
\cdot\big(1-p_N\big)\\
&=\sum_{l=1}^{k}\sign(x_l)\left[ \prod_{n=0}^{N}p_n^{\#\{i\in\{1,\ldots,l-1\}:x_i=n\}}\cdot
\sum_{n=0}^{x_l-1}p_n\right]\\
&\hspace{3ex}+\prod_{n=0}^{N}p_n^{\#\{i\in\{1,\ldots,k\}:x_i=n\}}.
\end{split}
\end{equation*}	
This finishes the proof.
\end{proof}


\section{More about the class \texorpdfstring{$\mathcal C$}{𝒞}}\label{More}
As we have seen in \cref{thm41}, with each sequence $\big(p_0,\ldots,p_N\big)$ of positive real numbers satisfying~\eqref{prob} we have associated a continuous increasing surjective solution $\varphi_{p_0,\ldots,p_N}\colon[0,1]\to[0,1]$ of equation~\eqref{e}. We denote by $\mathcal W$ the set of all these solutions. The main purpose of this section is to prove the following result.

\begin{theorem}\label{thm51}
The set $\mathcal W$ is linearly independent and its convex hull is contained in $\mathcal C$.
\end{theorem}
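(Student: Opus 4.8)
The plan is to treat the two assertions separately. The inclusion of the convex hull of $\mathcal W$ in $\mathcal C$ is immediate: every $\varphi_{p_0,\ldots,p_N}\in\mathcal W$ lies in $\mathcal C$ (this was noted right after \cref{thm41}), and $\mathcal C$ is closed under convex combinations by \cref{lem21}; since a convex combination is in particular a combination with nonnegative coefficients summing to $1$, \cref{lem21} applies verbatim. So the whole content of the theorem is the linear independence of $\mathcal W$.

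For linear independence, I would argue by evaluating members of $\mathcal W$ at the endpoints $f_n(\1)$ of the first-level complementary intervals, using the explicit formula from \cref{formula}\cref{formula for phi if not address} with $k=1$. Indeed, for $j\in\{0,\ldots,N-1\}$ the point $f_j(\1)$ has address $(j,N,N,\ldots)$, so the formula gives
\begin{equation*}
\varphi_{p_0,\ldots,p_N}(f_j(\1))=\sign(j)\sum_{n=0}^{j-1}p_n+p_j=\sum_{n=0}^{j}p_n,
\end{equation*}
the point being that the ``tail'' contribution past coordinate $1$ collapses to $p_j$ (all later coordinates equal $N$, contributing the geometric series $p_j\sum_{m\ge 0}p_N^m(1-p_N)=p_j$). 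Thus the values $\bigl(\varphi_{p_0,\ldots,p_N}(f_0(\1)),\ldots,\varphi_{p_0,\ldots,p_N}(f_{N-1}(\1))\bigr)$ are exactly the partial sums $\bigl(p_0,\,p_0+p_1,\,\ldots,\,p_0+\cdots+p_{N-1}\bigr)$, and hence from these $N$ numbers together with the value $1$ at $f_N(\1)$ one recovers the full tuple $(p_0,\ldots,p_N)$. Consequently the map $(p_0,\ldots,p_N)\mapsto\varphi_{p_0,\ldots,p_N}$ is injective; more importantly, distinct probability vectors give functions that already differ at one of these finitely many test points.

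Now suppose $\sum_{j=1}^{m}\lambda_j\,\varphi_{q^{(j)}}=0$ as a function on $[0,1]$, where the $q^{(j)}=(q^{(j)}_0,\ldots,q^{(j)}_N)$ are pairwise distinct probability vectors and $\lambda_j\in\mathbb R$. Evaluating this identity at $x=1$ gives $\sum_{j=1}^m\lambda_j=0$ (since each $\varphi_{q^{(j)}}(1)=1$), so the relation is really a relation among the functions $\varphi_{q^{(j)}}-\varphi_{q^{(m)}}$; alternatively, evaluating at the $N$ points $f_0(\1),\ldots,f_{N-1}(\1)$ gives, for each $r\in\{0,\ldots,N-1\}$, the scalar identity $\sum_{j=1}^m\lambda_j\sum_{n=0}^{r}q^{(j)}_n=0$. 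Together with $\sum_j\lambda_j=0$ this says the vector $(\lambda_1,\ldots,\lambda_m)$ is orthogonal to all $N+1$ columns of the matrix whose $j$-th row is $(1,q^{(j)}_0,q^{(j)}_0+q^{(j)}_1,\ldots,q^{(j)}_0+\cdots+q^{(j)}_{N-1})$, equivalently — after an invertible change of coordinates — to the rows $(1,q^{(j)}_0,\ldots,q^{(j)}_{N-1})$. This is not yet enough when $m>N+1$. The honest approach, and the step I expect to be the main obstacle, is therefore to go to deeper levels: evaluate the linear relation at the points $f_{x_1,\ldots,x_k}(\1)$ for all words $(x_1,\ldots,x_k)$, obtaining from \cref{formula}\cref{formula for phi if not address} that for every finite word the polynomial identity
\begin{equation*}
\sum_{j=1}^{m}\lambda_j\left[\sum_{l=1}^{k}\sign(x_l)\Bigl(\prod_{n=0}^{N}(q^{(j)}_n)^{\#\{i<l:x_i=n\}}\sum_{n=0}^{x_l-1}q^{(j)}_n\Bigr)+\prod_{n=0}^{N}(q^{(j)}_n)^{\#\{i\le k:x_i=n\}}\right]=0
\end{equation*}
holds. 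Specializing to the constant words $x=(n,n,\ldots,n)$ of length $k$ one extracts $\sum_{j=1}^m\lambda_j\,(q^{(j)}_n)^{k}\bigl(1+\text{something}\bigr)$-type expressions; more cleanly, choosing $x=(n,\ldots,n)$ and letting $k$ vary yields relations of the form $\sum_j \lambda_j\, c_{n,j}\,(q^{(j)}_n)^k = \text{lower order}$, and a Vandermonde/generating-function argument in the variable $k$ forces, grouping the $j$'s by the value of $q^{(j)}_n$, that $\sum_{j:\,q^{(j)}_n=t}\lambda_j\,(\cdots)=0$ for each value $t$. Iterating this over $n=0,\ldots,N$ separates the indices $j$ completely (two vectors $q^{(j)}\ne q^{(j')}$ differ in some coordinate), and one concludes $\lambda_j=0$ for all $j$. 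I would carry this out by first proving injectivity of the parametrization via the $k=1$ evaluation (the clean computation above), then proving linear independence by the multi-level evaluation plus a Vandermonde-type separation argument, flagging the bookkeeping of the exponents $\#\{i<l:x_i=n\}$ as the delicate point to get right.
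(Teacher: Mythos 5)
Your treatment of the convex-hull part coincides with the paper's: it is exactly an application of \cref{lem21} to elements of $\mathcal W\subset\mathcal C$. For the linear independence you take a genuinely different route. The paper conjugates the shift $T$ on $A_*$ to the Bernoulli shift via the coding map, deduces that each $\mu_{p_0,\ldots,p_N}$ is strong-mixing, hence ergodic, for $T$ (\cref{T is strong-mixing}), concludes that distinct such measures are extreme points of $\mathcal{M}^T(A_*)$ and therefore pairwise mutually singular, and finishes with \cref{mutually singular then independent}. Your plan is purely algebraic, via the explicit formula of \cref{formula}. The level-one computation $\varphi_{p_0,\ldots,p_N}(f_j(\1))=\sum_{n=0}^{j}p_n$ is correct and does give injectivity of the parametrization, and the idea of extracting algebraic identities from deeper levels is sound in principle.

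The gap is in the separation step. Evaluating only at the constant words $(n,\ldots,n)$ (note that \cref{formula}\cref{formula for phi if not address} requires $x_k\le N-1$, so the coordinate $n=N$ is not even available) yields, after splitting off the constant part $S_n^{(j)}/(1-q_n^{(j)})$ and applying Vandermonde in $k$, only relations of the form $\sum_{j:\,q_n^{(j)}=t}\lambda_j c_{n,j}=0$, where the positive weights $c_{n,j}$ depend on the \emph{whole} vector $q^{(j)}$ and not just on its $n$-th coordinate. ``Iterating over $n$'' therefore does not separate the indices: if, say, the $q^{(j)}$ are the six permutations of a fixed triple $(a,b,c)$, then every group $\{j:q_n^{(j)}=t\}$ has two elements, and one is left with a system of weighted two-term relations that your argument does not show to admit only the trivial solution. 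The honest completion of your strategy is to use \emph{all} words at once rather than the constant ones: from $\sum_j\lambda_j\varphi_{q^{(j)}}=0$, the continuity of the measures, and \eqref{mu(B)} one gets $\sum_j\lambda_j\prod_{i=1}^{k}q_{x_i}^{(j)}=0$ for every word $(x_1,\ldots,x_k)$ (the product being the increment of $\varphi_{q^{(j)}}$ across $[f_{x_1,\ldots,x_k}(\0),f_{x_1,\ldots,x_k}(\1)]$); multiplying by $\prod_{i}t_{x_i}$ and summing over all words of length $k$ gives
\begin{equation*}
\sum_{j=1}^{m}\lambda_j\Bigl(\sum_{n=0}^{N}t_nq_n^{(j)}\Bigr)^{k}=0
\quad\text{for all }k\in\mathbb N\text{ and all }t\in\mathbb R^{N+1},
\end{equation*}
and choosing $t$ outside the finitely many hyperplanes on which two of the linear forms $t\mapsto\sum_n t_nq_n^{(j)}$ coincide, a single Vandermonde argument in $k$ forces $\lambda_j=0$ for all $j$. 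With that replacement your approach is complete and more elementary than the paper's ergodic-theoretic argument, though it does not produce the pairwise mutual singularity of the measures, which the paper obtains as a by-product.
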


\subsection{Proof of Theorem~\ref{thm51}}
The statement concerning the convex hull follows from \cref{lem21}.

The proof for the independence will be divided into several lemmas. Before we formulate the first one, note that for every $y\in A_*$ equality~\cref{structure of invariant set} guarantees that there exists at least one $n\in\{0,\ldots,N\}$ such that $y\in f_n(A_*)$. Therefore, we can define a transformation $T\colon A_*\to [0,1]$ by putting
\begin{equation*}
T(y)=f_{n(y)}^{-1}(y),
\end{equation*}
where
\begin{equation*}
n(y)=\max\{n\in\{0,\ldots,N\}:y\in f_n(A_*)\}.
\end{equation*}

\begin{lemma}\label{measure preserving}
The transformation $T$ maps $A_*$ into $A_*$ and it is measure preserving for $\mu$.
\end{lemma}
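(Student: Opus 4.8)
The plan is to verify the two assertions separately. First, to see that $T$ maps $A_*$ into $A_*$: by definition $T(y)=f_{n(y)}^{-1}(y)$ where $y\in f_{n(y)}(A_*)$, so $f_{n(y)}^{-1}(y)\in A_*$ immediately. The content is entirely in the measure-preserving statement, so let me focus on that.

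The strategy for measure-preservation is to use the decomposition of $A_*$ induced by the maps $f_n$ together with the scaling identity \eqref{mu(B)}, namely $\mu(f_n(B))=p_n\mu(B)$ for every Borel set $B\subseteq[0,1]$, established in the proof of \cref{lem43}. The key geometric observation is that, thanks to the separation hypothesis~\eqref{0<1}, the sets $f_n(A_*)$ overlap only at the finitely many boundary points $f_n(\1)=\min f_{n+1}(A_*)$ type of coincidences — more precisely, $f_n(A_*)\cap f_m(A_*)$ for $n<m$ is contained in the finite set of endpoints $\{f_{n_1,\ldots,n_j}(\1),f_{n_1,\ldots,n_j+1}(\0)\}$, which has $\mu$-measure zero by \cref{lem43} (since $\mu$ is continuous). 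Hence, if I set $A^{(n)}=f_n(A_*)\setminus\bigcup_{m>n}f_m(A_*)$, then $A_*=\bigsqcup_{n=0}^N A^{(n)}$ up to a $\mu$-null set, and on $A^{(n)}$ the transformation $T$ agrees with $f_n^{-1}$ because $n(y)=n$ there.

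With this partition in hand, the computation is: for a Borel set $B\subseteq A_*$,
\begin{equation*}
\mu(T^{-1}(B))=\sum_{n=0}^N\mu\bigl(T^{-1}(B)\cap A^{(n)}\bigr)
=\sum_{n=0}^N\mu\bigl(f_n(B)\cap A^{(n)}\bigr).
\end{equation*}
Now $f_n(B)\subseteq f_n(A_*)$, and $f_n(B)\setminus A^{(n)}\subseteq f_n(A_*)\cap\bigcup_{m>n}f_m(A_*)$ has $\mu$-measure zero by the overlap observation above, so $\mu(f_n(B)\cap A^{(n)})=\mu(f_n(B))=p_n\mu(B)$ by \eqref{mu(B)}. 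Therefore $\mu(T^{-1}(B))=\sum_{n=0}^N p_n\mu(B)=\mu(B)$ by \eqref{prob}. For a general Borel set $B\subseteq[0,1]$ one reduces to $B\cap A_*$ since $\mu$ is carried by $A_*$ (\cref{lem42}) and $T^{-1}(B)=T^{-1}(B\cap A_*)$ as $T$ takes values in $A_*$. I should also check measurability of $T$ (equivalently of $n(\cdot)$): each set $\{y\in A_*: y\in f_m(A_*)\}=f_m(A_*)$ is compact, so $\{n(y)=n\}=f_n(A_*)\setminus\bigcup_{m>n}f_m(A_*)$ is Borel, and $T$ restricted to it is the continuous map $f_n^{-1}$, hence $T$ is Borel measurable.

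The main obstacle is the careful handling of the overlaps of the images $f_n(A_*)$: one must confirm that $f_n(A_*)$ and $f_m(A_*)$ ($n\neq m$) meet in a $\mu$-null set. This follows because $f_n(A_*)\subseteq[f_n(\0),f_n(\1)]$ and by \eqref{0<1} these intervals are pairwise disjoint except possibly sharing an endpoint ($f_n(\1)=f_{n+1}(\0)$ can occur only when $f_n(1)=f_{n+1}(0)$, and even then only that single point, or a countable set of images of it), and single points — indeed any countable set — have $\mu$-measure zero by \cref{lem43}. Once this is pinned down, everything else is the routine bookkeeping sketched above.
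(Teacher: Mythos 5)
Your proposal is correct and follows essentially the same route as the paper: decompose $T^{-1}(B)$ according to which branch $f_n$ is being inverted, observe that the images $f_n(A_*)$ overlap in at most single points (hence $\mu$-null sets, by the continuity of $\mu$ from \cref{lem43}), and conclude via the scaling identity \eqref{mu(B)} and \eqref{prob}. Your explicit partition $A^{(n)}=\{y:n(y)=n\}$ and the measurability check are slightly more carefully spelled out than in the paper's proof, but the mechanism is identical.
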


\begin{proof}
To see that $T(A_*)\subset A_*$ we fix $y\in A_*$. Then the injectivity of~$f_{n(y)}$ implies that there is exactly one $x\in A_*$ such that $y=f_{n(y)}(x)$. Thus $T(y)=x\in A_*$.
	
Now we prove that $T$ is measure preserving for $\mu$.

Fix a Borel set $B\subset A_*$. As $A_*\subset \bigcup_{n=0}^N [f_n(0),f_n(1)]$, we have
\begin{equation*}
T^{-1}(B)=\bigcup_{n=0}^N {\{y\in [f_n(0),f_n(1)]\cap A_*: T(y)\in B\}}.
\end{equation*}
Then using \cref{lem43} jointly with \eqref{0<1} and the fact that the set~$\bigcup_{i=0}^Nf_i^{-1}(y)$ contains just one element in the case where $y\in (f_n(0),f_n(1))$ and at most two elements in the case where $y\in\{f_n(0),f_n(1)\}$, we obtain
\begin{dmath*}
\mu(T^{-1}(B))=\sum_{n=0}^N\mu({\{y\in [f_n(0),f_n(1)]\cap A_*: T(y)\in B\}})
=\sum_{n=0}^N\mu({\{y\in [f_n(0),f_n(1)]\cap A_*: f_{n}^{-1}(y)\in B\}})
=\sum_{n=0}^N\mu({\{y\in [f_n(0),f_n(1)]\cap A_*: y\in f_n(B)\}}).
\end{dmath*}
Next note that $f_n(B)\subset f_n(A_*)\subset A_*$ for every $n\in\{0,\ldots,N\}$. Thus
\begin{dmath*}
\mu(T^{-1}(B))=\sum_{n=0}^N\mu(f_n(B)).
\end{dmath*}
Finally, according to \cref{mu(B)} we conclude that
\begin{dmath*}
\mu(T^{-1}(B))=\sum_{n=0}^{N}p_n\mu(B)=\mu(B),
\end{dmath*}
and the proof is complete.
\end{proof}

By \cref{attractor and addresses} the points in $A_*$ are exactly the ones that have an address. The next lemma shows that we might run into slight problems with the uniqueness of the addresses if
\begin{equation}\label{Nb}
f_0(0)=0,\quad f_N(1)=1\quad\hbox{ and }\quad N_b\neq\emptyset,
\end{equation}
where
\begin{equation*}
N_b=\{n\in \{0,\ldots,N-1\}: f_n(1)=f_{n+1}(0)\}.
\end{equation*}

\begin{lemma}\label{uniqueness}
$ $
\begin{enumerate}
\item\label{at most two addresses} Every point from $A_*$ has at most two addresses, and if a point from $A_*$ has two addresses, then \eqref{Nb} holds and exactly one of the addresses belongs to the set
\begin{equation*}
Z_b=\left\{(x_k)_{k\in\mathbb N}\in \{0,\ldots,N\}^{\mathbb N}:\exists_{n\in\mathbb N}(x_{n}\in N_b\hbox{ and }x_k=N\hbox{ for every }k>n)\right\}.
\end{equation*}
\item\label{additional address} If \eqref{Nb} holds and a point from $A_*$ has an address belonging to the set $Z_b$, then it also has an address not belonging to the set $Z_b$.
\item\label{exactly one address} Every point from $A_*$ has exactly one address if and only if \eqref{Nb} does not hold.
\end{enumerate}
\end{lemma}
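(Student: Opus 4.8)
The plan is to reduce everything to a single question: when can one point of $A_*$ carry two distinct addresses, and what must such a pair of addresses look like? Call $[f_{n_1,\ldots,n_k}(0),f_{n_1,\ldots,n_k}(1)]$ the \emph{cylinder} of the word $(n_1,\ldots,n_k)$; note that every address of a point places that point into the cylinder of each of its initial segments (cf.\ \eqref{xx}). Now suppose $(x_k)_{k\in\mathbb N}$ and $(y_k)_{k\in\mathbb N}$ are two distinct addresses of the same $x\in A_*$, and let $m$ be the first index at which they differ, so that $x_i=y_i$ for $i<m$. Then $x$ lies both in $f_{x_1,\ldots,x_{m-1}}\big([f_{x_m}(0),f_{x_m}(1)]\big)$ and in $f_{x_1,\ldots,x_{m-1}}\big([f_{y_m}(0),f_{y_m}(1)]\big)$; applying $f_{x_1,\ldots,x_{m-1}}^{-1}$ (which is defined at $x$ since $x\in f_{x_1,\ldots,x_{m-1}}([0,1])$) shows that $f_{x_1,\ldots,x_{m-1}}^{-1}(x)$ lies in both $[f_{x_m}(0),f_{x_m}(1)]$ and $[f_{y_m}(0),f_{y_m}(1)]$. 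By \eqref{0<1} the $N+1$ intervals $[f_0(0),f_0(1)],\ldots,[f_N(0),f_N(1)]$ are laid out in order, and two distinct ones intersect if and only if they form a consecutive pair $[f_n(0),f_n(1)]$, $[f_{n+1}(0),f_{n+1}(1)]$ with $n\in N_b$, meeting then only in the point $f_n(1)=f_{n+1}(0)$. Hence $\{x_m,y_m\}=\{j,j+1\}$ for some $j\in N_b$ and, writing $\sigma=(x_1,\ldots,x_{m-1})$ and taking $x_m=j$, $y_m=j+1$ (the other case being symmetric), $x=f_\sigma(f_j(1))=f_\sigma(f_{j+1}(0))$.

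The next step is to propagate this rigidity along the two tails. Since $x=f_{\sigma,j}(1)$ is the right endpoint of the cylinder of $(\sigma,j)$ and $x$ also lies in the cylinder of $(\sigma,j,x_{m+1})$, we must have $1\in[f_{x_{m+1}}(0),f_{x_{m+1}}(1)]$, i.e.\ $f_{x_{m+1}}(1)=1$; by \eqref{0<1} this forces $x_{m+1}=N$ and $f_N(1)=1$, and iterating the argument gives $x_k=N$ for all $k>m$. Symmetrically, since $x=f_{\sigma,j+1}(0)$ is the left endpoint of the cylinder of $(\sigma,j+1)$, we get $y_k=0$ for all $k>m$ and $f_0(0)=0$. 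Together with $j\in N_b$ (so $N_b\neq\emptyset$) this is exactly \eqref{Nb}, and the two addresses are forced to be
\[
(x_1,\ldots,x_{m-1},j,N,N,\ldots)\qquad\hbox{and}\qquad(x_1,\ldots,x_{m-1},j+1,0,0,\ldots).
\]
Since $\emptyset\neq N_b\subseteq\{0,\ldots,N-1\}$, we have $N\geq1$, so the first of these addresses lies in $Z_b$ (with witnessing index $n=m$) while the second does not, its tail being constantly $0\neq N$. Finally, a point cannot have three distinct addresses: the rigidity above shows that any two addresses of one point are, respectively, an initial word followed by an infinite block of $N$'s and an initial word followed by an infinite block of $0$'s, and three pairwise distinct sequences cannot be pairwise related in this way when $N\geq1$ — if one of them ends in $N$'s, both of its partners must end in $0$'s, and then the pair formed by those two partners has no member ending in $N$'s. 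This proves \cref{at most two addresses}.

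For \cref{additional address}, assume \eqref{Nb}; then $\0=0$ and $\1=1$, being the unique fixed points of $f_0$ and $f_N$. Let $z\in A_*$ have an address $(z_k)\in Z_b$, fix $n$ with $z_n\in N_b$ and $z_k=N$ for all $k>n$, and set $j=z_n$. Using $f_{\scriptstyle\underbrace{N,\ldots,N}_{\ell}}(0)\to\1=1$ one computes
\[
z=\lim_{k\to\infty}f_{z_1,\ldots,z_k}(0)=f_{z_1,\ldots,z_n}(1)=f_{z_1,\ldots,z_{n-1}}(f_j(1))=f_{z_1,\ldots,z_{n-1}}(f_{j+1}(0)),
\]
and then, using the analogous convergence $f_{\scriptstyle\underbrace{0,\ldots,0}_{\ell}}(1)\to\0=0$, one checks that $(z_1,\ldots,z_{n-1},j+1,0,0,\ldots)$ is again an address of $z$; it differs from $(z_k)$ at the $n$-th coordinate and has tail constantly $0\neq N$, so it does not belong to $Z_b$. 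Assertion \cref{exactly one address} now follows: if \eqref{Nb} fails, \cref{at most two addresses} rules out two addresses while \cref{attractor and addresses} supplies at least one; if \eqref{Nb} holds, then picking $j\in N_b$ the point $f_j(1)=f_{j+1}(0)$ has $(j,N,N,\ldots)\in Z_b$ for an address (so it lies in $A_*$), and \cref{additional address} produces a second address.

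The step I expect to be the main obstacle is the tail-propagation argument of the second paragraph. A ``fork'' between two cylinders at level $m$ does not by itself yield two genuine addresses — one or even both branches may fail to extend to a full address — so one has to show that a branch pinned to an endpoint of a cylinder is forced to continue extremally, and, crucially, one must read off the boundary identities $f_0(0)=0$ and $f_N(1)=1$ (equivalently $\0=0$, $\1=1$) from the bare coexistence of two addresses. The remaining ingredients are either the one-step interval bookkeeping that comes directly from \eqref{0<1}, or routine limit computations built from the convergences $f_{\scriptstyle\underbrace{0,\ldots,0}_{\ell}}(\cdot)\to\0$ and $f_{\scriptstyle\underbrace{N,\ldots,N}_{\ell}}(\cdot)\to\1$ recorded just before \cref{lem24}.
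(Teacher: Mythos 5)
Your proof is correct and follows essentially the same route as the paper's: locate the first index of disagreement, use the ordering \eqref{0<1} to force the two cylinders to touch at $f_j(1)=f_{j+1}(0)$ with $j\in N_b$, and then extract $f_N(1)=1$, $f_0(0)=0$ and the forced tails $N,N,\ldots$ and $0,0,\ldots$; parts \cref{additional address} and \cref{exactly one address} are likewise handled as in the paper via the fixed-point identities $\0=0$, $\1=1$. The only difference is presentational: your explicit tail-propagation argument spells out what the paper compresses into the single sentence ``making use of \eqref{0<1}, we conclude that $x_m\in N_b$, $f_0(0)=0$ and $f_N(1)=1$.''
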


\begin{proof}
(\ref{at most two addresses}) Assume that $(x_k)_{k\in\mathbb{N}}$ and $(y_k)_{k\in\mathbb{N}}$ are two different addresses of a point $x\in A_*$. Put
\begin{equation*}
m=\min\{k\in\mathbb{N}:x_k\neq y_k\}
\end{equation*}
and let $x_m<y_m$. Then according to \eqref{x} and \eqref{0<1} we have
\begin{dmath*}
x=f_{x_1,\ldots,x_{m}}\left(\lim_{k\to\infty}f_{x_{m+1},\ldots,x_{k}}(1)\right)
\leq f_{x_1,\ldots,x_m}(1)\leq f_{y_1,\ldots,y_m}(0)\leq f_{y_1,\ldots,y_m}\left(\lim_{k\to\infty} f_{y_{m+1},\ldots,y_{k}}(0)\right)=x.
\end{dmath*}
Thus $f_{x_1,\ldots,x_m}(1)=f_{y_1,\ldots,y_m}(0)$, and hence $f_{x_m}(1)=f_{y_m}(0)\in A_*$. Finally, making use of \eqref{0<1}, we conclude that $x_m\in N_b$, $f_0(0)=0$ and $f_N(1)=1$. In consequence \eqref{Nb} holds, $(x_k)_{k\in\mathbb{N}}\in Z_b$ and $(y_k)_{k\in\mathbb{N}}\not\in Z_b$. Moreover, if we assumed that $x$ has a third address $(z_k)_{k\in\mathbb{N}}$, different from both the first ones, we would have $(z_k)_{k\in\mathbb{N}}\in Z_b\setminus\{(x_k)_{k\in\mathbb{N}}\}$, which is impossible.

(\ref{additional address}) Assume that \eqref{Nb} holds and let a point $x\in A_*$ has an address $(x_k)_{k\in\mathbb{N}}\in Z_b$. Then there is $m\in N$ such that $x_m\in N_b$ and $x=f_{x_1,\ldots,x_m}(\1)$. Applying now \eqref{Nb} we get
\begin{equation*}
x=f_{x_1,\ldots,x_m}(1)=f_{x_1,\ldots,x_m+1}(0)=f_{x_1,\ldots,x_m+1}(\0),
\end{equation*}
which shows that \eqref{x} has an address not belonging to the set $Z_b$.

(\ref{exactly one address}) Assertion (\ref{at most two addresses}) implies that if \eqref{Nb} does not hold, then every point from $A_*$ has exactly one address.

Assume now that every point from $A_*$ has exactly one address and suppose that, on the contrary, \eqref{Nb} holds. Then $0=\0$ and $1=\1$. Fix $n\in N_b$ and put $x=f_n(\1)$. Then $x\in A_*$ and $f_n(\1)=f_n(1)=f_{n+1}(0)=f_{n+1}(\0)$, which jointly with \eqref{x} implies that $x$ has two different addresses, a contradiction.
\end{proof}

We now define a map $\pi\colon \{0,\ldots,N\}^{\field{N}}\setminus Z_b\to A_*$  by putting
\begin{equation*}
\pi((x_k)_{k\in\mathbb N})=\lim_{k\to\infty}f_{x_1,\ldots,x_k}(0),
\end{equation*}
where $Z_b$ is as in \cref{uniqueness} in the case where \eqref{Nb} holds and $Z_b=\emptyset$ in the case where \eqref{Nb} does not hold.

\begin{lemma}\label{pi is a bijection}
The map $\pi$ is a bijection.
\end{lemma}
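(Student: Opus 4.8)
The plan is to verify that $\pi$ is well-defined, then surjective, and finally injective, drawing freely on \cref{attractor and addresses,uniqueness}. For well-definedness, I first note that for any sequence $(x_k)_{k\in\mathbb N}\in\{0,\ldots,N\}^{\mathbb N}$ the limit $\lim_{k\to\infty}f_{x_1,\ldots,x_k}(0)$ exists and coincides with $\lim_{k\to\infty}f_{x_1,\ldots,x_k}(1)$, since by \eqref{0<1} and monotonicity of the $f_n$ the nested intervals $[f_{x_1,\ldots,x_k}(0),f_{x_1,\ldots,x_k}(1)]$ are decreasing with lengths bounded by $L^{k}\to 0$, where $L\in(0,1)$ is the largest Lipschitz constant of $f_0,\ldots,f_N$. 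Hence $\pi((x_k)_{k\in\mathbb N})$ is the unique point in the intersection, and by \cref{attractor and addresses} it lies in $A_*$; moreover $(x_k)_{k\in\mathbb N}$ is an address of that point in the sense of \eqref{x}. So $\pi$ indeed maps $\{0,\ldots,N\}^{\mathbb N}\setminus Z_b$ into $A_*$.

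For surjectivity, take $x\in A_*$. By \cref{attractor and addresses} it has at least one address. If \eqref{Nb} does not hold, then $Z_b=\emptyset$ and any address $(x_k)_{k\in\mathbb N}$ satisfies $\pi((x_k)_{k\in\mathbb N})=x$. If \eqref{Nb} holds, then by \cref{uniqueness}\ref{at most two addresses} the point $x$ has one or two addresses, and if it has one address that address is not in $Z_b$ (a single address in $Z_b$ would, by \cref{uniqueness}\ref{additional address}, force a second address outside $Z_b$, contradicting uniqueness); if $x$ has two addresses, exactly one of them lies in $Z_b$ by \cref{uniqueness}\ref{at most two addresses}, so the other is a preimage of $x$ under $\pi$. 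Either way $x\in\pi(\{0,\ldots,N\}^{\mathbb N}\setminus Z_b)$.

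For injectivity, suppose $(x_k)_{k\in\mathbb N},(y_k)_{k\in\mathbb N}\in\{0,\ldots,N\}^{\mathbb N}\setminus Z_b$ satisfy $\pi((x_k)_{k\in\mathbb N})=\pi((y_k)_{k\in\mathbb N})=:x$. Then both are addresses of $x\in A_*$. If they were different, \cref{uniqueness}\ref{at most two addresses} would give that $x$ has exactly two addresses, one of which lies in $Z_b$; but neither $(x_k)_{k\in\mathbb N}$ nor $(y_k)_{k\in\mathbb N}$ lies in $Z_b$, a contradiction. Hence $(x_k)_{k\in\mathbb N}=(y_k)_{k\in\mathbb N}$, and $\pi$ is injective.

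The main obstacle is bookkeeping the case \eqref{Nb}: one must make sure that excising exactly the sequences in $Z_b$ removes precisely the redundancy in the addressing, i.e.\ that in the two-address case one and only one address is deleted (giving injectivity without destroying surjectivity) and that in the one-address case nothing is deleted (which is where \cref{uniqueness}\ref{additional address} is needed, to rule out the possibility that the sole address happens to lie in $Z_b$). Once that dichotomy is organized cleanly via \cref{uniqueness}, the rest is routine.
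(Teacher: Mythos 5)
Your proof is correct and follows the same route as the paper: the paper's proof simply invokes \cref{uniqueness} together with a cited theorem of Barnsley on the code-space projection onto the attractor, and your argument just writes out explicitly what that citation and \cref{attractor and addresses} supply (well-definedness via nested intervals, surjectivity via existence of addresses, injectivity via the at-most-two-addresses dichotomy). The careful handling of the case where the unique address might lie in $Z_b$, ruled out via \cref{uniqueness}(ii), is exactly the point the paper's one-line proof leaves implicit.
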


\begin{proof}
It is enough to apply \cref{uniqueness} and \cite[Theorem 1 in Chapter 4.2]{B1988}.
\end{proof}

Denote by $\sigma$ the \emph{Bernoulli shift}, i.e.\ the map from $\{0,\ldots,N\}^{\field{N}}$ into itself defined by
\begin{equation*}
\sigma\big((x_k)_{k\in\mathbb N}\big)=(x_{k+1})_{k\in\mathbb N}.
\end{equation*}

\begin{lemma}\label{sigma pi T commute}
For every $n\in \field{N}$ we have
\begin{equation*}
\sigma^{-n}\circ \pi^{-1}=\pi^{-1}\circ T^{-n}.
\end{equation*}
\end{lemma}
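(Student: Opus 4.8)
The plan is to prove the identity $\sigma^{-n}\circ\pi^{-1}=\pi^{-1}\circ T^{-n}$ by first establishing the single-step relation $\sigma\circ\pi^{-1}=\pi^{-1}\circ T$ on $A_*$ (equivalently $\pi\circ\sigma = T\circ\pi$ on the relevant domain), and then bootstrapping to arbitrary $n$ by an easy induction together with the fact that inverse images of compositions compose. So the real content is the case $n=1$.

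First I would unwind the definitions. Let $x\in A_*$ and let $(x_k)_{k\in\mathbb N}=\pi^{-1}(x)$ be its unique address in $\{0,\ldots,N\}^{\mathbb N}\setminus Z_b$, so that $x=\lim_{k\to\infty}f_{x_1,\ldots,x_k}(0)$. By \cref{attractor and addresses} and \eqref{x} this same sequence also satisfies $x=\lim_{k\to\infty}f_{x_1,\ldots,x_k}(1)$, hence $x=f_{x_1}\big(\lim_{k\to\infty}f_{x_2,\ldots,x_k}(0)\big)=f_{x_1}(z)$, where $z=\pi\big(\sigma((x_k)_k)\big)\in A_*$. The key point is to check that $x_1$ is exactly the index $n(x)=\max\{n:x\in f_n(A_*)\}$ selected in the definition of $T$. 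Indeed, $x\in f_{x_1}(A_*)$, so $n(x)\geq x_1$; and if $x\in f_m(A_*)$ for some $m>x_1$, then $x\geq f_m(0)\geq f_{x_1}(1)\geq x$ by \eqref{0<1}, forcing $f_{x_1}(1)=f_m(0)$ and, as in the proof of \cref{uniqueness}\ref{at most two addresses}, this would give the address $x$ a second address lying in $Z_b$ with the chosen address not in $Z_b$; but then $(x_k)_k\in Z_b$ would be the OTHER address, contradicting $(x_k)_k=\pi^{-1}(x)\notin Z_b$ — more carefully, one argues that $m>x_1$ together with $x\in f_m(A_*)$ forces $(x_k)_k$ itself to be the $Z_b$-address, the contradiction. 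Hence $n(x)=x_1$ and therefore $T(x)=f_{x_1}^{-1}(x)=z=\pi\big(\sigma(\pi^{-1}(x))\big)$. This shows $\pi\circ\sigma\circ\pi^{-1}=T$ on $A_*$, i.e. $\sigma\circ\pi^{-1}=\pi^{-1}\circ T$, which is the $n=1$ case in the form without inverses; taking inverses of the (bijective, by \cref{pi is a bijection}) maps involved gives $\sigma^{-1}\circ\pi^{-1}=\pi^{-1}\circ T^{-1}$.

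For the inductive step, assume $\sigma^{-n}\circ\pi^{-1}=\pi^{-1}\circ T^{-n}$. Then
\begin{equation*}
\sigma^{-(n+1)}\circ\pi^{-1}=\sigma^{-1}\circ\sigma^{-n}\circ\pi^{-1}=\sigma^{-1}\circ\pi^{-1}\circ T^{-n}=\pi^{-1}\circ T^{-1}\circ T^{-n}=\pi^{-1}\circ T^{-(n+1)},
\end{equation*}
using the $n=1$ case in the middle. This completes the induction and hence the proof.

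The main obstacle is the verification that the index $x_1$ coming from the address coincides with the maximal index $n(x)$ used to define $T$: this is exactly where the subtlety about non-unique addresses enters, and one must use that $\pi$ is defined on the complement of $Z_b$ precisely to rule out the bad case. Once that matching is established the rest is formal manipulation of bijections, which is why I would spend the bulk of the write-up on the $n=1$ step and dispatch the induction in two lines.
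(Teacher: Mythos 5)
Your proposal is correct and follows essentially the same route as the paper: establish the single-step semiconjugacy $\pi\circ\sigma=T\circ\pi$ off $Z_b$ by checking that the maximal index $n(x)$ in the definition of $T$ equals $x_1$ (the contrary case forcing $z=1$, $f_{x_1}(1)=f_{x_1+1}(0)$ and $x_k=N$ for $k\geq 2$, i.e.\ $(x_k)_k\in Z_b$), then pass to preimages and induct. Just make sure the final step is phrased via preimages of sets rather than ``taking inverses of the maps,'' since $\sigma$ and $T$ are not injective; your opening remark about inverse images of compositions already covers this.
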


\begin{proof}
We begin with proving that we have
\begin{equation}\label{diagram commutes}
\pi\circ\sigma=T\circ \pi
\end{equation}
on $\{0,\ldots,N\}^{\field{N}}\setminus Z_b$.

First, we note that $\sigma(\{0,\ldots,N\}^{\field{N}}\setminus Z_b)\subset\{0,\ldots,N\}^{\field{N}}\setminus Z_b$.
Fix $(x_k)_{k\in\mathbb N}\in \{0,\ldots,N\}^{\field{N}}\setminus Z_b$ and put
$z=\lim_{k\to\infty}f_{x_2,\ldots,x_k}(0)$.
Then
\begin{dmath*}
\pi(\sigma((x_k)_{k\in\mathbb N}))=\lim_{k\to\infty}f_{x_2,\ldots,x_k}(0)=z,
\end{dmath*}
and
\begin{dmath*}
T(\pi((x_k)_{k\in\mathbb N}))
=T\left(\lim_{k\to\infty}f_{x_1,\ldots,x_k}(0)\right)
=T\left(f_{x_1}(z)\right)=f_{n(f_{x_1}(z))}^{-1}(f_{x_1}(z)).
\end{dmath*}
Since $z\in A_*$, we have $f_{x_1}(z)\in f_{x_1}(A_*)$, and so
\begin{equation*}
x_1\leq n(f_{x_1}(z)).
\end{equation*}
Suppose towards a contradiction that $x_1<n(f_{x_1}(z))$. Then $f_{x_1}(z)\in f_{x_1+1}(A_*)$, and by \eqref{0<1} we have $z=1$. Therefore,
$f_{x_1}(1)=f_{x_1+1}(0)$ and $x_k=N$ for every $k\geq 2$, which is impossible as $(x_k)_{k\in\mathbb N}\not\in Z_b$.
In consequence $x_1=n(f_{x_1}(z))$. Hence $f_{n(f_{x_1}(z))}^{-1}(f_{x_1}(z))=z$, which yields that \eqref{diagram commutes} holds.

To complete the proof it is enough to proceed by induction with the use of \cref{diagram commutes}.
\end{proof}

Let us consider now the measure $\field{P}_{p_0,\ldots,p_N}$ defined on~$\{0,\ldots,N\}$  by
\begin{equation*}
\field{P}_{p_0,\ldots,p_N}(\{k\})=p_k.
\end{equation*}
Note that $\field{P}_{p_0,\ldots,p_N}$ is a probability measure by \eqref{prob}.
Further, we let $\field{P}$ be the product measure on $\{0,\ldots,N\}^{\field{N}}$ of $\mathbb N$ copies of the measure $\field{P}_{p_0,\ldots,p_N}$. It is known that the Bernoulli shift is strong-mixing for~$\field{P}$ (see e.g. \cite[Problem~4.3]{biswas2014ergodic} or \cite[Exercise 2.7.9]{EW2011}).

\begin{lemma}\label{nu is mupi}
If $B\subset\{0,\ldots,N\}^{\field{N}}\setminus Z_b$ is a Borel set, then
$\pi(B)$ is a Borel set and
\begin{equation}\label{Pmu}
\field{P}(B)=\mu(\pi(B)).
\end{equation}
\end{lemma}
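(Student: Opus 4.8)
The plan is to show that $\pi$ transports the product measure $\field{P}$ to the measure $\mu$, by verifying the identity \eqref{Pmu} first on a convenient generating family of Borel sets and then extending by a monotone-class / uniqueness-of-measures argument. The natural generating family is the collection of cylinders
\begin{equation*}
C_{a_1,\ldots,a_k}=\{(x_j)_{j\in\mathbb N}\in\{0,\ldots,N\}^{\mathbb N}:x_1=a_1,\ldots,x_k=a_k\},
\end{equation*}
since these generate the product $\sigma$-algebra and are closed under finite intersection. First I would check that $\pi$ maps the part of such a cylinder lying outside $Z_b$ onto (essentially) the set $[f_{a_1,\ldots,a_k}(\0),f_{a_1,\ldots,a_k}(\1)]\cap A_*$: indeed an address $(x_j)$ has $x_1=a_1,\ldots,x_k=a_k$ if and only if, by \eqref{x} and \eqref{0<1}, the limit $\pi((x_j))=\lim_j f_{x_1,\ldots,x_j}(0)$ lies in the interval $[f_{a_1,\ldots,a_k}(0),f_{a_1,\ldots,a_k}(1)]=f_{a_1,\ldots,a_k}([0,1])$, and since $\pi((x_j))\in A_*$ it lies in $f_{a_1,\ldots,a_k}(A_*)$. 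One then computes
\begin{equation*}
\field{P}(C_{a_1,\ldots,a_k})=\prod_{j=1}^k p_{a_j},
\end{equation*}
while by $k$-fold application of \cref{mu(B)} (i.e.\ $\mu(f_n(B))=p_n\mu(B)$, valid here because \eqref{f_n(0)} holds),
\begin{equation*}
\mu\big(f_{a_1,\ldots,a_k}(A_*)\big)=p_{a_1}\cdots p_{a_k}\,\mu(A_*)=\prod_{j=1}^k p_{a_j},
\end{equation*}
using $\mu(A_*)=1$ from \cref{lem42}. So \eqref{Pmu} holds on cylinders.

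Next I would address the measurability claim, that $\pi(B)$ is Borel for every Borel $B\subset\{0,\ldots,N\}^{\mathbb N}\setminus Z_b$. The clean way is to observe that $\pi$ is a \emph{continuous} map on $\{0,\ldots,N\}^{\mathbb N}\setminus Z_b$ (the partial compositions $f_{x_1,\ldots,x_k}(0)$ converge uniformly in $(x_j)$ because the $f_n$ are contractions with a common Lipschitz bound $L<1$, so the limit depends continuously on the address), and that by \cref{pi is a bijection} it is a continuous bijection onto $A_*$; moreover its inverse $\pi^{-1}$, being the ``address'' map, is Borel measurable (on $A_*$ minus the at-most-countable set of points with two addresses it is even continuous, by an argument like the one in the proof of \cref{perfect}). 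Hence $\pi$ is a Borel isomorphism, and in particular $\pi(B)$ is Borel whenever $B$ is; alternatively, invoke the Lusin–Souslin theorem (an injective Borel image of a Borel set in a Polish space is Borel). With measurability in hand, both $B\mapsto\field{P}(B)$ and $B\mapsto\mu(\pi(B))$ are Borel probability measures on $\{0,\ldots,N\}^{\mathbb N}\setminus Z_b$ — the latter is a measure precisely because $\pi$ is injective, so $\pi$ of a disjoint union is a disjoint union, and $\mu(\pi(\,\cdot\,))$ is countably additive; it is a probability because $\pi(\{0,\ldots,N\}^{\mathbb N}\setminus Z_b)=A_*$ up to a $\mu$-null set (the countably many removed addresses in $Z_b$ correspond to countably many points of $A_*$, and $\mu$ is continuous by \cref{lem43}), which has full $\mu$-measure. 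Two Borel probability measures agreeing on the $\pi$-system of cylinders agree everywhere, which gives \eqref{Pmu}.

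The main obstacle is bookkeeping around $Z_b$ and the two-address points: one must be careful that removing $Z_b$ does not break the cylinder computation (it does not, since $Z_b$ is contained in a countable union of cylinders of a special form and, more to the point, $\field{P}(Z_b)=0$ because $\field{P}$-almost every sequence fails to be eventually constant equal to $N$, while $\mu$ of the corresponding point set is $0$ by \cref{lem43}), and that the images $\pi(C_{a_1,\ldots,a_k}\setminus Z_b)$ differ from $f_{a_1,\ldots,a_k}(A_*)$ only by a $\mu$-null set. Once those null-set comparisons are pinned down, everything else is the routine ``agree on a generating $\pi$-system, conclude equality of finite measures'' argument, together with the standard Borel-isomorphism facts about $\pi$.
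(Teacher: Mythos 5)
Your argument is correct, and for the heart of the lemma --- the identity \eqref{Pmu} --- it coincides with the paper's proof: both verify the equality on the $\pi$-system of cylinders via the computation $\mu(f_{a_1,\ldots,a_k}(A_*))=p_{a_1}\cdots p_{a_k}$ from \cref{mu(B)} and $\mu(A_*)=1$, note that removing $Z_b$ (and the at most one exceptional point $f_{a_1,\ldots,a_k}(\1)$ in the image) costs nothing because $\mu$ has no atoms by \cref{lem43}, and then conclude by uniqueness of Borel probability measures agreeing on a generating $\pi$-system, using injectivity of $\pi$ to see that $B\mapsto\mu(\pi(B))$ is countably additive. Where you genuinely diverge is the measurability claim. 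The paper handles it by explicitly identifying $\pi$ of a cylinder as either $f_{x_1,\ldots,x_m}(A_*)$ or that compact set minus the single point $f_{x_1,\ldots,x_m}(\1)$, hence Borel, and then (implicitly using that $\pi$ is a bijection onto $A_*$, so that the family of sets with Borel image is a $\sigma$-algebra) extending to all Borel sets. You instead observe that $\pi$ is continuous on the Polish space $\{0,\ldots,N\}^{\field{N}}\setminus Z_b$ and injective by \cref{pi is a bijection}, and invoke the Lusin--Souslin theorem. Your route is shorter and more robust (it needs no case analysis on $N_b$), at the cost of importing a nontrivial descriptive-set-theoretic tool; the paper's route is elementary and fully explicit. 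One small imprecision on your side: membership of $\pi((x_j))$ in $[f_{a_1,\ldots,a_k}(0),f_{a_1,\ldots,a_k}(1)]$ does not by itself force the address to begin with $a_1,\ldots,a_k$ (shared endpoints of adjacent intervals are the obstruction), but you only use the inclusion $\pi(C_{a_1,\ldots,a_k}\setminus Z_b)\subset f_{a_1,\ldots,a_k}(A_*)$ together with surjectivity up to a null set, so the computation is unaffected.
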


\begin{proof}
We first prove that if $B\subset\{0,\ldots,N\}^{\field{N}}\setminus Z_b$ is a Borel set, then $\pi(B)$ is a Borel set as well.
	
As every Borel set in $\{0,\ldots,N\}^{\field{N}}\setminus Z_b$ is generated by sets of the form
\begin{equation}\label{cylinder}
B=\left(\{x_1\}\times \cdots\times \{x_m\}\times \{0,\ldots,N\}^{\field{N}}\right)\setminus Z_b,
\end{equation}
where $m\in\mathbb N$ and $x_1,\ldots,x_m\in\{0,\ldots,N\}$, it is sufficient to show that $\pi(B)$ is a Borel set for every set of the form \eqref{cylinder}.

Fix a set $B$ of the form \eqref{cylinder} with $m\in\mathbb N$ and $x_1,\ldots,x_m\in\{0,\ldots,N\}$. If either \eqref{Nb} does not hold or \eqref{Nb} holds and $x_m\not\in N_b$, then by \cref{pi is a bijection} we have
\begin{dmath}\label{piB1}
\pi(B)=f_{x_1,\ldots,x_m}\left(\pi\left(\{0,\ldots,N\}^{\field{N}}\setminus Z_b\right)\right)=f_{x_1,\ldots,x_m}(A_*).
\end{dmath}
If \eqref{Nb} holds and $x_m\in N_b$, then by \cref{pi is a bijection} we have
\begin{dmath}\label{piB2}
\pi(B)=f_{x_1,\ldots,x_m}\left(\pi\left((\{0,\ldots,N\}^{\field{N}}\setminus Z_b)\setminus \{N\}^{\field{N}}\right)\right)=f_{x_1,\ldots,x_m}(A_*)\setminus f_{x_1,\ldots,x_m}(\{\1\}).
\end{dmath}

Since $A_*$ and $\{\1\}$ are compact sets and $f_0,\ldots,f_N$ are contractions, it follows that $f_{x_1,\ldots,x_m}(A_*)$ and $f_{x_1,\ldots,x_m}(\{\1\})$ are compact sets. In consequence, we see that in both the considered cases the set $\pi(B)$ is Borel.
	
Now we prove that \eqref{Pmu} holds for every Borel set $B\subset\{0,\ldots,N\}^{\field{N}}\setminus Z_b$.
		
Since every two Borel probability measures defined on $\{0,\ldots,N\}^{\field{N}}\setminus Z_b$ agreeing on cylinders are equal, it is suffices to show that \eqref{Pmu} holds for every cylinder  $B\subset\{0,\ldots,N\}^{\field{N}}\setminus Z_b$. Moreover, by the additivity of the measures, we only need to show that \eqref{Pmu} holds for every set of the form \eqref{cylinder}, where $m\in\mathbb N$ and $x_1,\ldots,x_m\in\{0,\ldots,N\}$.

Fix a set $B$ of the form \eqref{cylinder} with $m\in\mathbb N$ and $x_1,\ldots,x_m\in\{0,\ldots,N\}$. Then either \eqref{piB1} or \eqref{piB2} is satisfied, and by \cref{lem43,lem42} we see that in both the cases we have
\begin{equation*}
\mu\left(f_{x_1,\ldots,x_m}^{-1}(\pi(B))\right)=\mu(A_*)=1.
\end{equation*}
This jointly with \cref{mu(B)} yields
\begin{dmath*}
\mu(\pi(B))=\mu\left(f_{x_1}\left(\cdots\left(f_{x_m}\left(f_{x_1,\ldots,x_m}^{-1}(\pi(B))\right)\right)\cdots\right)\right)=\prod_{i=1}^mp_{x_i}.
\end{dmath*}
Finally, note that $\field{P}(B)=\prod_{i=1}^mp_{x_i}$.
\end{proof}

\begin{lemma}\label{T is strong-mixing}
The transformation~$T$ is strong-mixing for~$\mu$.
\end{lemma}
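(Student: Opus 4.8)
The plan is to transfer the strong-mixing property of the Bernoulli shift $\sigma$ — which, as recalled above, holds for the product measure $\field{P}$ — to the transformation $T$ through the coding map $\pi$, using that $\pi$ semiconjugates $\sigma$ with $T$ by \cref{sigma pi T commute}. By definition we must show
\begin{equation*}
\lim_{n\to\infty}\mu\big(T^{-n}(A)\cap B\big)=\mu(A)\mu(B)
\end{equation*}
for all Borel sets $A,B\subset[0,1]$; since $\mu([0,1]\setminus A_*)=0$ by \cref{lem42} and $T$ maps $A_*$ into $A_*$ by \cref{measure preserving}, it is enough to treat Borel sets $A,B\subset A_*$. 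First I would observe that $\pi$ is continuous: two sequences from $\{0,\ldots,N\}^{\field{N}}\setminus Z_b$ agreeing on their first $m$ coordinates are mapped by $\pi$ to points of $A_*$ at distance at most $L^{m}$, where $L\in(0,1)$ is a common Lipschitz constant of $f_0,\ldots,f_N$ (cf.\ the proof of \cref{perfect}). Since $Z_b$ is countable, the domain $\{0,\ldots,N\}^{\field{N}}\setminus Z_b$ of $\pi$ is a Borel subset of $\{0,\ldots,N\}^{\field{N}}$; hence, for every Borel set $A\subset A_*$, the preimage $\pi^{-1}(A)$ is a Borel subset of $\{0,\ldots,N\}^{\field{N}}$ contained in $\{0,\ldots,N\}^{\field{N}}\setminus Z_b$, and \cref{nu is mupi} together with the surjectivity of $\pi$ (\cref{pi is a bijection}) gives $\field{P}(\pi^{-1}(A))=\mu(\pi(\pi^{-1}(A)))=\mu(A)$.

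Now fix Borel sets $A,B\subset A_*$ and put $E=\pi^{-1}(A)$ and $F=\pi^{-1}(B)$. Since $\pi$ is a bijection onto $A_*$, taking preimages commutes with intersection, and applying \cref{sigma pi T commute} we get
\begin{equation*}
\pi^{-1}\big(T^{-n}(A)\cap B\big)=\pi^{-1}\big(T^{-n}(A)\big)\cap\pi^{-1}(B)=\sigma^{-n}\big(\pi^{-1}(A)\big)\cap F=\sigma^{-n}(E)\cap F .
\end{equation*}
As $F\subset\{0,\ldots,N\}^{\field{N}}\setminus Z_b$, the Borel set $\sigma^{-n}(E)\cap F$ also lies in $\{0,\ldots,N\}^{\field{N}}\setminus Z_b$; so, applying $\pi$ to both sides (using $T^{-n}(A)\cap B\subset A_*$ and the surjectivity of $\pi$) and then \cref{nu is mupi}, we obtain
\begin{equation*}
\mu\big(T^{-n}(A)\cap B\big)=\mu\big(\pi(\sigma^{-n}(E)\cap F)\big)=\field{P}\big(\sigma^{-n}(E)\cap F\big).
\end{equation*}
Letting $n\to\infty$ and invoking that $\sigma$ is strong-mixing for $\field{P}$, the right-hand side tends to $\field{P}(E)\field{P}(F)$, which equals $\mu(A)\mu(B)$ by the first paragraph. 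This proves the lemma.

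I expect the only genuinely delicate point to be the bookkeeping around the exceptional set $Z_b$: since $\pi$ is defined only on $\{0,\ldots,N\}^{\field{N}}\setminus Z_b$, one must check that every set to which \cref{nu is mupi} is applied indeed sits in that domain — which is automatic here, since all of them are subsets of $F=\pi^{-1}(B)$. Everything else is a routine transfer of a dynamical property through the measure-preserving coding map, relying on \cref{pi is a bijection}, \cref{sigma pi T commute}, \cref{nu is mupi}, and the cited strong-mixing of the Bernoulli shift.
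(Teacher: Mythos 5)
Your proof is correct and follows essentially the same route as the paper: transfer the strong-mixing of the Bernoulli shift through the coding map $\pi$ via \cref{sigma pi T commute} and \cref{nu is mupi}, after reducing to Borel sets contained in $A_*$. The only difference is cosmetic — you run the chain of equalities starting from $\mu(T^{-n}(A)\cap B)$ rather than from $\mu(A)\mu(B)$, and you supply the measurability of $\pi^{-1}(A)$ (via continuity of $\pi$ and countability of $Z_b$), a point the paper leaves implicit.
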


\begin{proof}
The transformation $T$ is measure preserving for $\mu$ by \cref{measure preserving}. To prove that it is strong-mixing for~$\mu$, fix two Borel sets $A,B\subset A_*$. Then using \cref{pi is a bijection,nu is mupi}, the fact that the Bernoulli shift is strong-mixing for~$\field{P}$ and \cref{sigma pi T commute} we get
\begin{dmath*}
\mu(A)\mu(B)=\mu(\pi(\pi^{-1}(A)))\mu(\pi(\pi^{-1}(B)))
=\field{P}(\pi^{-1}(A))\field{P}(\pi^{-1}(B))
=\lim_{m\to\infty}\field{P}(\sigma^{-m}(\pi^{-1}(A))\cap\pi^{-1}(B))
=\lim_{m\to \infty}\field{P}(\pi^{-1}(T^{-m}(A))\cap \pi^{-1}(B))
=\lim_{m\to \infty}\field{P}(\pi^{-1}(T^{-m}(A)\cap B))
=\lim_{m\to \infty}\mu(T^{-m}(A)\cap B).
\end{dmath*}
The proof is complete.
\end{proof}

Denote by $\mathcal{M}^T(A_*)$ the set of all Borel probability measures defined on the $\sigma$-algebra of all Borel subset of the interval $[0,1]$ supported on $A_*$, making the transformation $T$ measure preserving. Note that $\mu\in \mathcal{M}^T(A_*)$ by \cref{lem42}.

\begin{lemma}\label{mutually singular then independent}
Every family of pairwise mutually singular measures belonging to the set $\mathcal{M}^T(A_*)$ is linearly independent.
\end{lemma}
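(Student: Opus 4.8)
The plan is to prove linear independence directly from the definition: suppose $\sum_{i=1}^m \alpha_i \mu_i = 0$ for finitely many distinct measures $\mu_1,\ldots,\mu_m \in \mathcal{M}^T(A_*)$ that are pairwise mutually singular, with scalars $\alpha_i \in \mathbb{R}$, and show all $\alpha_i = 0$. The key observation is that mutual singularity lets us separate the measures geometrically: for each $i$ there is a Borel set $S_i \subset A_*$ carrying $\mu_i$ (i.e.\ $\mu_i(S_i) = 1$) with $\mu_j(S_i) = 0$ for all $j \neq i$. Such sets exist because for finitely many pairwise mutually singular measures one can choose, for each pair $i \neq j$, a Borel set on which $\mu_i$ is concentrated and $\mu_j$ vanishes, and then intersect/complement appropriately to obtain the $S_i$; one may even take them pairwise disjoint.

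Once the $S_i$ are in hand, fix an index $j$ and evaluate the relation $\sum_i \alpha_i \mu_i = 0$ on the set $S_j$: this immediately gives $\alpha_j \mu_j(S_j) = \alpha_j = 0$. Since $j$ was arbitrary, all coefficients vanish, which is exactly linear independence. Note that the hypothesis that the measures lie in $\mathcal{M}^T(A_*)$ — i.e.\ that they are supported on $A_*$ and make $T$ measure preserving — is not actually needed for this particular lemma; only the mutual singularity and the fact that they are probability measures (so $\mu_j(S_j) = 1 \neq 0$) are used. The invariance under $T$ becomes relevant only in the subsequent argument (presumably invoking the ergodicity furnished by \cref{T is strong-mixing}) to produce an abundant supply of pairwise mutually singular elements of $\mathcal{M}^T(A_*)$.

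The only point requiring a little care — and the closest thing to an obstacle — is the construction of the separating sets $S_i$ from pairwise mutual singularity. Mutual singularity is defined pairwise, so one must check it upgrades to a simultaneous separation for a finite family. For each ordered pair $(i,j)$ with $i \neq j$ pick a Borel set $E_{ij}$ with $\mu_i(E_{ij}) = 1$ and $\mu_j(E_{ij}) = 0$; then $S_j' := \bigcap_{i \neq j} E_{ji}$ satisfies $\mu_j(S_j') = 1$ and $\mu_i(S_j') \leq \mu_i(E_{ji}) \cdot 0 = 0$... more precisely $\mu_i(S_j') \le \mu_i(E_{ij})^c$-type reasoning: since $\mu_i(E_{ij}) = 1$ means $E_{ij}$ is $\mu_i$-full, and $S_j' \subset E_{ji}$ while $\mu_i(E_{ji})$ need not be small — one instead uses $S_j' \subset \{$complement of $E_{ij}\}$? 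Let me be careful: take $E_{ij}$ with $\mu_i(E_{ij})=1$, $\mu_j(E_{ij})=0$, and set $S_j' = \bigcap_{i\neq j}(A_*\setminus E_{ij})$; then $\mu_j(S_j')=1$ since each $A_*\setminus E_{ij}$ is $\mu_j$-full, and for $i\neq j$, $S_j'\subset A_*\setminus E_{ij}$ which is $\mu_i$-null, so $\mu_i(S_j')=0$. That is the clean argument.

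\begin{proof}
Let $\mu_1,\ldots,\mu_m\in\mathcal{M}^T(A_*)$ be pairwise mutually singular and suppose $\sum_{i=1}^m\alpha_i\mu_i=0$ for some real numbers $\alpha_1,\ldots,\alpha_m$. For each pair $i\neq j$, mutual singularity of $\mu_i$ and $\mu_j$ provides a Borel set $E_{ij}\subset A_*$ with $\mu_i(E_{ij})=1$ and $\mu_j(E_{ij})=0$. For each $j\in\{1,\ldots,m\}$ put
\begin{equation*}
S_j=\bigcap_{i\neq j}(A_*\setminus E_{ij}).
\end{equation*}
Since each set $A_*\setminus E_{ij}$ has $\mu_j$-measure $1$, we get $\mu_j(S_j)=1$. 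On the other hand, for $i\neq j$ we have $S_j\subset A_*\setminus E_{ij}$, and as $\mu_i(E_{ij})=1$ it follows that $\mu_i(A_*\setminus E_{ij})=0$, so $\mu_i(S_j)=0$.

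Now fix $j\in\{1,\ldots,m\}$ and evaluate the equality $\sum_{i=1}^m\alpha_i\mu_i=0$ on the Borel set $S_j$:
\begin{equation*}
0=\sum_{i=1}^m\alpha_i\mu_i(S_j)=\alpha_j\mu_j(S_j)=\alpha_j.
\end{equation*}
Since $j$ was arbitrary, we conclude that $\alpha_1=\cdots=\alpha_m=0$, which proves that the family is linearly independent.
\end{proof}
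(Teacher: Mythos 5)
Your proof is correct and follows essentially the same route as the paper: both arguments use the pairwise mutual singularity to build, for each index $j$, a Borel set that is $\mu_j$-full and $\mu_i$-null for all $i\neq j$ (you via $\bigcap_{i\neq j}(A_*\setminus E_{ij})$, the paper via a union of the complementary sets), and then evaluate the linear relation on that set to kill the coefficient $\alpha_j$. Your side remark that the $T$-invariance plays no role in this particular lemma is also accurate.
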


\begin{proof}
Fix $m\in\mathbb N\setminus\{1\}$, pairwise mutually singular measures $\mu_1,\ldots,\mu_m\in \mathcal{M}^T(A_*)$, numbers $\alpha_1,\ldots,\alpha_m\in \mathbb R\setminus\{0\}$ and assume by contradiction that
\begin{equation*}\label{independent}
\sum_{i=1}^m \alpha_i\mu_i=0.
\end{equation*}	
Since the measures are mutually singular, for each $i,j\in\{1,\ldots,m\}$ with $i\not=j$ there are sets $A_i^j$ and $A_j^i=X\setminus A_i^j$ such that $\mu_i(A_i^i)=\mu_j(A_j^i)=0$. Put $A_m=\bigcup_{i=1}^{m-1}A_m^i$. Then
\begin{equation*}
0\leq \mu_m(A_m)=\mu_m\left(\bigcup_{i=1}^{m-1}A_m^i\right)\leq \sum_{i=1}^{m-1}\mu_m(A_m^i)=0
\end{equation*}
and for every $j\in\{1,\ldots,m-1\}$ we have
\begin{dmath*}
1\geq \mu_j(A_m)=\mu_j\left(X\setminus \bigcap_{i=1}^{m-1}A_i^m\right)
\geq \mu_j(X\setminus A_j^m)\geq \mu_j(X)-\mu_j(A_j^m)=1.
\end{dmath*}
In consequence
\begin{equation*}
\alpha_m=\sum_{i=1}^{m}\alpha_i\mu_i(X\setminus A_m)=0,
\end{equation*}
and the proof is complete.
\end{proof}

Now we are in a position to prove that the set $\mathcal{W}$ is linearly independent.

Fix $m$ different functions from $\mathcal{W}$ and consider the corresponding measures $\mu_1,\ldots,\mu_m\in \mathcal{M}^T(A_*)$. From \cref{T is strong-mixing} we infer that the transformation~$T$ is ergodic for all the measures. Thus $\mu_1,\ldots,\mu_m$ are extreme points of the set $\mathcal{M}^T(A_*)$ (see \cite[Theorem~4.4]{EW2011} or \cite[Proposition 12.4]{P2001}), and hence they are pairwise mutually singular. Invoking \cref{mutually singular then independent} gives the claim.

\subsection{An application of Theorem~\ref{thm51}}
In \cite{M1985} Janusz Matkowski posed a problem asking about the existence of nonlinear monotonic and continuous solutions $\Phi\colon[0,1]\to\mathbb R$ of a very particular case of the equation
\begin{equation}\label{e1}
\Phi(x)=\sum_{n=0}^{N}\Phi(f_n(x))-\sum_{n=1}^{N}\Phi(f_n(0)).
\end{equation}
Motivated by this problem, denote by $\mathcal M$ the vector space spanned by ${\mathcal W}\cup\{\mathds{1}\}$, where $\mathds{1}$ denotes the constant function that equals $1$ on $[0,1]$. Note that by \cref{thm51} and the fact that $\phi(0)=0$ for each $\phi\in\mathcal W$, the set ${\mathcal W}\cup\{\mathds{1}\}$ is a basis of $\mathcal M$.

\begin{prop}\label{prop52}
Every function belonging to $\mathcal M$ is a continuous solution of equation
\eqref{e1}. Moreover, if $\phi_1,\ldots,\phi_m\in{\mathcal W}$ and $\alpha_1,\ldots,\alpha_m\in\mathbb R$ are of the same sign, then the function $\sum_{i=1}^m\alpha_{i}\phi_{i}+\alpha_{0}$ is monotone for every $\alpha_{0}\in\mathbb R$.
\end{prop}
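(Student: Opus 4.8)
The plan is to prove the two assertions separately, each by a direct computation that reduces to the single equation~\eqref{e} satisfied by every $\phi\in\mathcal W$. For the first assertion, I would take an arbitrary element $\Psi\in\mathcal M$ and write it as $\Psi=\sum_{i=1}^m\alpha_i\phi_i+\alpha_0\mathds 1$ with $\phi_i\in\mathcal W$ and $\alpha_i\in\mathbb R$. Continuity is immediate since each $\phi_i$ is continuous and $\mathds 1$ is constant. For the functional equation, I would plug $\Psi$ into the right-hand side of~\eqref{e1} and use linearity:
\begin{equation*}
\sum_{n=0}^{N}\Psi(f_n(x))-\sum_{n=1}^{N}\Psi(f_n(0))
=\sum_{i=1}^m\alpha_i\Bigl(\sum_{n=0}^{N}\phi_i(f_n(x))-\sum_{n=1}^{N}\phi_i(f_n(0))\Bigr)
+\alpha_0\Bigl((N+1)-N\Bigr).
\end{equation*}
The $\mathds 1$-term contributes exactly $\alpha_0$, which is what we want. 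For the $\phi_i$-terms, I would invoke that $\phi_i$ solves~\eqref{e}, i.e.\ $\phi_i(x)=\sum_{n=0}^{N}\phi_i(f_n(x))-\sum_{n=0}^{N}\phi_i(f_n(0))$, together with \cref{lem22} (or rather the normalization $\phi_i(0)=0$ and the fact that $f_0(0)=\0$ gives $\phi_i(f_0(0))=0$; more directly, $\phi_i(f_0(0))=0$ follows from $\phi_i(0)=0$ only if $f_0(0)=0$, so I should instead use \cref{lem22} which gives $\phi_i(f_0(0))=0$ unconditionally). Hence $\sum_{n=0}^{N}\phi_i(f_n(0))=\sum_{n=1}^{N}\phi_i(f_n(0))$, so $\sum_{n=0}^{N}\phi_i(f_n(x))-\sum_{n=1}^{N}\phi_i(f_n(0))=\phi_i(x)$. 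Summing the contributions yields $\sum_{i=1}^m\alpha_i\phi_i(x)+\alpha_0=\Psi(x)$, as required.

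For the second assertion, suppose $\alpha_1,\dots,\alpha_m$ all share a common sign; without loss of generality assume they are all nonnegative (otherwise replace each $\phi_i$ by $-\phi_i$ and each $\alpha_i$ by $-\alpha_i$, which only changes the sign of the whole function and of $\alpha_0$, hence does not affect monotonicity). Then $\sum_{i=1}^m\alpha_i\phi_i$ is a nonnegative linear combination of increasing functions, hence increasing, and adding the constant $\alpha_0$ preserves monotonicity. In the all-nonpositive case the sum is decreasing. Either way $\sum_{i=1}^m\alpha_i\phi_i+\alpha_0$ is monotone.

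I do not expect any genuine obstacle here; the proposition is a straightforward bookkeeping corollary of \cref{thm51} and \cref{lem22}. The one point requiring a little care is the passage from equation~\eqref{e} (sum from $n=0$ to $N$ of the constants) to equation~\eqref{e1} (sum from $n=1$ to $N$), which is exactly absorbed by the relation $\phi_i(f_0(0))=0$ from \cref{lem22}; and correspondingly the extra $\mathds 1$ in the spanning set is precisely what supplies the freedom of the additive constant $\alpha_0$ on the left of~\eqref{e1}. I would also remark that this is how the paper resolves Matkowski's problem: since $\mathcal W$ is infinite (indeed linearly independent by \cref{thm51}) and its members are nonlinear, $\mathcal M$ contains an abundance of nonlinear monotone continuous solutions of~\eqref{e1}.
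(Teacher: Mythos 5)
Your proposal is correct and follows essentially the same route as the paper: decompose the element of $\mathcal M$ over the basis $\mathcal W\cup\{\mathds 1\}$, use linearity, and absorb the index shift between \eqref{e} and \eqref{e1} via $\phi_i(f_0(0))=0$ from \cref{lem22}, with the monotonicity claim handled as an obvious consequence of each $\phi_i$ being increasing. Your parenthetical self-correction lands on exactly the point the paper makes (that $\phi_i(f_0(0))=0$ comes from \cref{lem22}, not merely from $\phi_i(0)=0$), so there is nothing to add.
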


\begin{proof}
Fix $\Phi\in\mathcal M$. Then there exist $\alpha_0,\ldots,\alpha_m\in\mathbb R$ and $\phi_1,\ldots,\phi_m\in{\mathcal W}$ such that $\Phi=\sum_{i=1}^m\alpha_{i}\phi_{i}+\alpha_{0}$. Obviously, $\Phi$ is continuous. According to the first assertion of~\cref{lem22} we see that $\phi_i(f_0(0))=0$ for every \mbox{$i\in\{1,\ldots,m\}$}, and hence applying also \eqref{e}, we obtain
\begin{equation*}
\begin{split}
\sum_{n=0}^{N}\Phi(f_n(x))-\sum_{n=1}^{N}\Phi(f_n(0))&=
\sum_{n=0}^{N}\left(\sum_{i=1}^m\alpha_{i}\phi_{i}(f_n(x))+\alpha_{0}\right)\\
&\hspace*{3ex}-\sum_{n=1}^{N}\left(\sum_{i=1}^m\alpha_{i}\phi_{i}(f_n(0))+\alpha_{0}\right)\\
&=\sum_{i=1}^m\alpha_{i}\left(\sum_{n=0}^{N}\phi_{i}(f_n(x))-\sum_{n=0}^{N}\phi_{i}(f_n(0))\right)+\alpha_{0}\\
&=\sum_{i=1}^m\alpha_{i}\phi_{i}(x)+\alpha_{0}=\Phi(x)
\end{split}
\end{equation*}	
for every $x\in[0,1]$.
	
The moreover part of the assertion is clear.
\end{proof}

\section*{Acknowledgements} The research of both authors was supported by the Silesian University Mathematics Department (Iterative Functional Equations and Real Analysis program). Furthermore, the research leading to these results has received funding from the European Research Council under the European Union's Seventh Framework Programme (\ignoreSpellCheck{FP}/2007-2013) / \ignoreSpellCheck{ERC} Grant Agreement n.291497 while the second author was a postdoctoral researcher at the University of Warwick.

\bibliographystyle{plain}
\bibliography{Notes}

\end{document}